\newtheorem{theorem}{Theorem}[section]
\theoremstyle{definition}
\newtheorem{proposition}[theorem]{Proposition}
\newtheorem{lemma}[theorem]{Lemma}
\newtheorem{remark}[theorem]{Remark}
\def\be{\begin{equation}}
\def\ee{\end{equation}}
\def\+{\,+\,}
\def\m{\,-\,}
\def\={\;=\;}
\def\mod{\;\text{\rm mod}\;}
\def\Q{\mathbf Q}   %  \def\Q{\mathbb Q}    % \def\Q{\mathbf{Q}}
\def\Z{\mathbf Z}   %  \def\Z{\mathbb Z}    % \def\Z{\mathbf{Z}}
\def\N{\mathbf N}   %  \def\N{\mathbb N}    % \def\N{\mathbf N}
\def\C{\mathbf C}   %  \def\C{\mathbb C}    % \def\C{\mathbf C}  
\def\R{\mathbf R}   %  \def\R{\mathbb R}    % \def\R{\mathbf R}  
\def\P{\mathbf P}
\def\i{^{-1}}
\def\a{\alpha} 
\def\ve{\varepsilon}
\def\BN{\N}
\def\BZ{\Z}
\def\BQ{\Q}
\def\BR{\R}
\def\BC{\C}
\def\calH{\mathcal H}
\def\ep{\epsilon}
\def\vepsilon{\varepsilon}
\def\ve{\vepsilon}
\def\ti{\widetilde}
\def\SL{\mathrm{SL}}
\def\z{\zeta}
\def\th{\theta}
\def\diag{\mathrm{diag}}
\def\e{\bold e}  
\def\Li{\mathrm{Li}}
\newcommand{\mb}{\mathbf}
\def\sm#1#2#3#4{\bigl(\begin{smallmatrix}#1&#2\\#3&#4\end{smallmatrix}\bigr)}
\def\Z{\Bbb Z} 
\def\Q{\Bbb Q} 
\def\R{\Bbb R} 
\def\C{\Bbb C}  
\def\N{\Bbb N} 
\def\O{\text O}
\def\={\;=\;} 
\def\ap{\;\sim\;} 
\def\+{\,+\,} 
\def\z{\zeta} 
\def\L{\mathrm{L}} 
\def\Ps{{}_1\Psi_1}
\def\bfI{{\bf I}}
\def\diag{\mathrm{diag}}
\def\PSL{\mathrm{PSL}}
\def\wtA{\widetilde A}
\begin{document}

%%%%%%%%%%%%%%%%%%%%%%{page1}

\title{Asymptotics of Nahm sums at roots of unity}
\author{Stavros Garoufalidis}
\address{School of Mathematics \\
         Georgia Institute of Technology \\
         Atlanta, GA 30332-0160, USA \newline 
         {\tt \url{http://www.math.gatech.edu/~stavros}}
         }
\email{stavros@math.gatech.edu}
\author{Don Zagier}
\address{Max Planck Institute for Mathematics \\
         53111 Bonn, Germany \newline
         {\tt \url{http://people.mpim-bonn.mpg.de/zagier}}
         }
\email{dbz@mpim-bonn.mpg.de}
\thanks{
%F.C. and S.G. were supported in part by NSF. %\\
%\newline
%1991 {\em Mathematics Classification.} Primary 57N10. Secondary 57M25.
%\newline
{\em Key words and phrases: Nahm's conjecture, modular functions, K-theory,
Bloch group, asymptotics, Kashaev invariant.
}
}

%\date{\today }

\begin{abstract}
We give a formula for 
the radial asymptotics to all orders of the special $q$-hypergeometric 
series known as Nahm sums at complex roots of unity. This result is used 
in~\cite{CGZ} to prove one direction of Nahm's conjecture relating the 
modularity of Nahm sums to the vanishing of a certain invariant in $K$-theory.
The power series occurring in our asymptotic formula are identical to the 
conjectured asymptotics of the Kashaev invariant of a knot once we convert 
Neumann-Zagier data into Nahm data, suggesting a deep connection between 
asymptotics of quantum knot invariants and asymptotics of Nahm sums 
that will be discussed further in a subsequent publication.
\end{abstract}

\maketitle

{\footnotesize
\tableofcontents
}

%%%%%%%%%%%%%%%%%%%%%%%%%%%%%%%%%%%%%%%%%%%%%%%%%%%%%%%%%%%%%%%%%%%%%%%%%%%%
%%%%%%%%%%%%%%%%%%%%%%%%%%%%%%%%%%%%%%%%%%%%%%%%%%%%%%%%%%%%%%%%%%%%%%%%%%%%

%% older files on radial asymptotics of Nahm sums:
%% mypapers/2finish/zagier/quantum.modularity.kashaev/text1.knot52.tex

%%%%%%%%%%%%%%%%%%%%%%%%%%%%%%%%%%%%%%%%%%%%%%%%%%%%%%%%%%%%%%%%%%%%%%%%%%%%
%%%%%%%%%%%%%%%%%%%%%%%%%%%%%%%%%%%%%%%%%%%%%%%%%%%%%%%%%%%%%%%%%%%%%%%%%%%%

\section{Introduction}

Nahm sums are special $q$-hypergeometric series whose summand involves
a quadratic form, a linear form and a constant. They were introduced
by Nahm~\cite{Nahm} in connection with characters of rational conformal 
field theories. 
Nahm formulated a very surprising conjecture, that has 
elicited a lot of interest, relating the question of their modularity to 
the vanishing of a certain invariant in algebraic $K$-theory (more 
specifically, in $K_3$-group, or equivalently the Bloch group, 
of the algebraic numbers). 
This conjecture, at least in one direction, is proved in~\cite{CGZ} using 
the asymptotics given in this paper together with the construction of units
associated to elements of $K$-theory given there.  

The definition of Nahm sums and the question of determining when they are
modular were motivated by the famous Rogers-Ramanujan identities, which say 
that
$$ 
G(q)\,:=\,\sum_{n=0}^\infty\frac{q^{n^2}}{(q)_n}
 \,=\,\prod_{\substack{ n>0 \\ (\frac n5)=1}}\!\frac1{1-q^n},\qquad
H(q)\,:=\,\sum_{n=0}^\infty\frac{q^{n^2+n}}{(q)_n}
 \,=\,\prod_{\substack{ n>0 \\ (\frac n5)=-1}}\!\frac1{1-q^n}\,,
$$
where $(q)_n=(1-q)\cdots(1-q^n)$ is the $q$-Pochhammer symbol or quantum 
$n$-factorial. These identities imply via the Jacobi triple product formula 
that the two functions $q^{-1/60}G(q)$ and $q^{11/60}H(q)$ are quotients of 
unary theta-series by the Dedekind eta-function and hence are modular 
functions. More generally, Nahm~\cite{Nahm} 
considered the following multi-dimensional generalization
%$q$-hypergeometric series 
\begin{equation}
\label{eq.FABC}
F_Q(q) \= F_{A,B,C}(q) \= 
\sum_{n\in\Z_{\ge0}^N} \frac{q^{Q(n)}}{(q)_{n_1}\cdots(q)_{n_N}}
\qquad \in \quad \BZ(\!(q^{\frac1{d}})\!)\,,
\end{equation}
where $Q:\Z^N\to\Q$ is a quadratic function, i.e., a function of the form 
$$ 
Q(n) \= \frac12 n^tAn + Bn+C 
$$
where $A=(a_{ij})$ is a symmetric positive definite $N \times N$ matrix with 
rational entries, $B \in \BQ^N$ a column vector and $C \in \BQ$ a scalar and
$d$~is any
denominator of~$Q$ (i.e., any positive integer with $dQ(\Z^N)\subseteq\Z$).

Our aim is to give the asymptotic expansion of $F_Q(q)$ as $q$ approaches 
a root of unity (of order prime to a denominator of~$Q$) radially.

%We will call $q$-series of the above form {\it Nahm sums}. 
%Our goal is to give explicit formulas for its radial asymptotics
%at complex roots of unity. 

%and led to his above-mentioned conjecture concerning their 
%modularity. They have also appeared recently in quantum topology in relation 
%to the stabilization of the coefficients of the colored Jones 
%polynomial (see Garoufalidis-Le~\cite{GL:Nahm}), and they are building blocks 
%of the 3D-index of an ideally triangulated manifold due to 
%Dimofte-Gaiotto-Gukov~\cite{DGG1,DGG2}. 
%Further connections between quantum topological invariants and Nahm 
%sums are given in~\cite{GZ2}

The constant term of these asymptotic expansions is 
used in~\cite{CGZ} to prove one direction of Nahm's modularity conjecture. 
Very strikingly, our formulas are identical to a collection of power series 
(one for every complex root of unity) associated to a Neumann-Zagier datum
in~\cite{DG1,DG2} and conjectured to be the asymptotic expansion of the
Kashaev invariant at complex roots of unity. The coincidence of the
asymptotics of Nahm sums at $q=1$ and the series of~\cite{DG1} was observed
several years ago via an explicit map from Neumann-Zagier data to Nahm data, 
and leads to a deeper connection between quantum invariants of knots defined 
on the roots of unity (such as the Kashaev invariant) and $q$-series 
invariants of knots (such as the 3D-index of 
Dimofte-Gukov-Gaiotto~\cite{DGG1}). This connection will be explained in
a later publication~\cite{GZ:qseries}.  

Nahm sums appear naturally in cohomological
mirror symmetry~\cite{KoSo}, in the representation theory of 
quivers~\cite{knots-quivers} and in quantum 
topology in relation to the stabilization of the coefficients of the 
colored Jones polynomial~\cite{GL:Nahm}. In addition
they are building blocks of the 3D-index of an ideally triangulated 
manifold due to Dimofte-Gaiotto-Gukov~\cite{DGG1,DGG2}, and appear as
holomorphic blocks in the state-integrals of Chern-Simons theory
with complex gauge group~\cite{holomorphic-blocks,GK:qseries}. 
Further connections between quantum topological invariants and Nahm 
sums are given in~\cite{GZ:qseries}.

%%%%%%%%%%%%%%%%%%%%%%%%%%%%%%%%%%%%%%%%%%%%%%%%%%%%%%%%%%%%%%%%%%%%%%%%%%%%
%%%%%%%%%%%%%%%%%%%%%%%%%%%%%%%%%%%%%%%%%%%%%%%%%%%%%%%%%%%%%%%%%%%%%%%%%%%%

\section{Asymptotic formula for the summand of a Nahm sum}
\label{sec.results}

Recall the Pochhammer symbol $(qx;q)_\infty\=\prod_{i \geq 1} (1-q^i x)$,
an entire function of $x$ for $q$ a complex number with $|q|<1$. 
Lemma~\ref{lem.1} below gives the radial asymptotics of the 
Pochhammer symbol at roots of unity. To formulate it, recall 
the $r$th Bernoulli polynomial $B_r(x)$, the $r$th polylogarithm function
$\Li_r(w)=\sum_{k \geq 1} w^k/k^r$ for $|w|<1$ and the 
{\it cyclic quantum dilogarithm} function 
\begin{equation}
\label{eq.Dmz}
D_{\zeta}(x) \= \prod_{t=1}^{m-1}(1-\zeta^t x)^t \quad\in\Q(\z)[x]
\end{equation}
where $\zeta$ is a primitive $m$th root of unity. 
The function that we will actually use is $D_\z(x)^{1/m}$ when $|x|<1$, 
where the $m$th root is defined by using the principal part of the 
logarithm of each factor in~\eqref{eq.Dmz}.
Below, if $f(\ve)$ is the germ of a smooth function of $\ve$ defined in
a neighborhood of~0 in the right half-plane $\Re(\ve)>0$, we write that 
\begin{equation} 
\label{eq.fas}
f(\ve) \ap \, \sum_{k=0}^\infty a_k\,\ve^k  
\end{equation} 
for $\ve\searrow0$ if $f(\ve)=\sum_{k=0}^{K-1}a_k\ve^k+\text O(\ve^K)$ for 
every $K>0$ as $\ve$ tends to~0 from the right, or equivalently if $f$ is 
$C^\infty$ from the right at~0 with Taylor coefficients $a_k=f^{(k)}(0)/k!\,$. 

%%% see also: notes/note.proof.lemma1.pdf
%%% w=\zeta^k z^{1/m}
%%% see also: zagier/Polylogarithm.Expansion.nb

\begin{lemma}
\label{lem.1}
Let $w$ be a complex number with $|w|<1$, $q=\z e^{-\ve/m}$ where $\z$ is
a primitive $m$th root of unity, and $\nu$ a complex number such that
$\nu \ve =o(1)$. Set
\begin{align}
\label{eq.l1}
\log(q \,w \, e^{-\frac{\nu \ve}{m}};q)_\infty & \= -\frac{1}{m\ve} \Li_2(z) - 
\Bigl(\frac{\nu}{m}-\frac{1}{2}\Bigr) \log (1-z) -\frac{\ve \nu^2}{2m} 
\frac{z}{1-z}  \\ \notag
& \qquad - \frac{1}{m} \log D_\z(w) - \log(1-w) + \psi_{w,\z}(\nu,\ve)
\end{align}
where $z=w^m$. Then, $\psi_{w,\z}(\nu,\ve)$  has an explicit asymptotic 
expansion in $\BC[\nu \ve^{2/3}][[\ve^{1/3}]] \subset \BC[\nu][[\ve]]$ 
as $\ve \searrow 0$, 
\begin{equation}
\label{eq.psi}
\psi_{w,\z}(\nu,\ve) \sim -\sum_{r \geq 2} \sum_{t=1}^m 
\Bigl( B_r\Bigl(1-\frac{t+\nu}{m} \Bigr) -\delta_{r,2} \frac{\nu^2}{m^2}
\Bigr) \Li_{2-r}(\z^t w) \frac{\ve^{r-1}}{r!}
\end{equation}
in which the coefficient of $\nu^n$ is $O(\ve^{2n/3})$
for every $n \geq 0$. 
\end{lemma}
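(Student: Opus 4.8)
The plan is to start from the Euler product $\log(qwe^{-\nu\ve/m};q)_\infty = \sum_{j\ge1}\log(1-q^j w e^{-\nu\ve/m})$ and apply the Euler–Maclaurin summation formula in the variable $j$. Writing $q=\z e^{-\ve/m}$, the $j$th factor is $1 - \z^j w \exp(-(j+\nu)\ve/m)$. The key structural point is to organize the sum over $j$ according to the residue $t = j \bmod m$, since $\z^j$ depends only on $t \in \{1,\dots,m\}$; this is exactly what produces the sum over $t$ and the $m$th roots of unity $\z^t w$ in the final formula. I would set $g_t(u) = \log\bigl(1 - \z^t w\, e^{-u\ve/m}\bigr)$ and write the full sum as $\sum_{t=1}^m \sum_{\ell\ge0} g_t\bigl((t+\ell m+\nu)\bigr)$, then apply Euler–Maclaurin to each inner sum over $\ell$.

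The main work is Euler–Maclaurin bookkeeping. The integral term $\int g_t\,du$ produces, after summing over $t$, the $\Li_2$ contribution (via $\int \log(1-e^{-s})\,ds \sim -\Li_2$) that gives the leading $-\tfrac1{m\ve}\Li_2(z)$ with $z=w^m$, together with the $\log(1-z)$ and $\log D_\z(w)$ terms; I would expect the identity $\prod_{t=1}^m (1-\z^t w) = 1 - w^m = 1-z$ and the definition \eqref{eq.Dmz} of $D_\z$ to combine the boundary and integral pieces into precisely the stated closed form. The remaining Euler–Maclaurin correction terms are governed by the Bernoulli numbers, and this is where the $B_r$ appear: the standard expansion contributes $\sum_r \tfrac{B_r}{r!}$ times derivatives of $g_t$ evaluated at the lower endpoint. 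Converting the derivatives $g_t^{(r-1)}$ at the endpoint into the polylogarithm values $\Li_{2-r}(\z^t w)$ uses the elementary fact that $\bigl(w\tfrac{d}{dw}\bigr)^{r-1}\log(1-\z^t w)$ is a rational function whose value is $-\Li_{2-r}(\z^t w)$ for $r\ge2$; the shift of the Bernoulli argument by $(t+\nu)/m$ records the fractional starting point of each residue class. The $-\delta_{r,2}\nu^2/m^2$ subtraction and the explicit $-\tfrac{\ve\nu^2}{2m}\tfrac{z}{1-z}$ term in \eqref{eq.l1} are the pieces I would extract separately to make the $r=2$ term match, since $\Li_0(z)=z/(1-z)$.

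The delicate part, and the one I expect to be the real obstacle, is the uniformity in $\nu$: the hypothesis is only $\nu\ve=o(1)$, so $\nu$ may grow, and one must justify that the asymptotic lies in $\BC[\nu\ve^{2/3}][[\ve^{1/3}]]$ with the coefficient of $\nu^n$ being $O(\ve^{2n/3})$. The fractional powers $\ve^{1/3}$ strongly suggest that the naive Euler–Maclaurin expansion must be supplemented by a careful analysis near the transition region where $j\ve/m$ is of order one and the summand $g_t$ is no longer slowly varying relative to $\nu$; the balance $\nu \sim \ve^{-2/3}$ is precisely the scale at which the $\nu^2\ve$ term and the higher corrections become comparable. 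I would therefore handle the $\nu$-dependence by a Taylor expansion of $g_t$ in $\nu$ and track how each power of $\nu$ is accompanied by an extra factor of $\ve$ from the endpoint derivatives, showing inductively that $\nu^n$ cannot occur below order $\ve^{2n/3}$. Establishing this grouping rigorously — rather than the formal shape of the series, which follows directly from Euler–Maclaurin — is where the care is needed, and I would expect it to rest on a quantitative remainder estimate for the Euler–Maclaurin formula that is uniform as both $\ve\searrow0$ and $\nu\to\infty$ subject to $\nu\ve=o(1)$.
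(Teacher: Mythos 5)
Your route is correct in outline but genuinely different from the paper's. You propose Euler--Maclaurin summation over the product index $j$, split into residue classes $j\equiv t\bmod m$, with the Bernoulli polynomials entering as the endpoint corrections $B_r\bigl(1-\tfrac{t+\nu}{m}\bigr)$ attached to the derivatives $g_t^{(r-1)}(0)=\pm\Li_{2-r}(\z^t w)$. The paper instead expands $-\log(1-x)=\sum_{k\ge1}x^k/k$, interchanges the $k$- and $n$-sums, sums the geometric series over $n$ in each residue class to get $\sum_{k,t}\frac{(\z^t w)^k}{k}\,\frac{e^{-k(t+\nu)\ve/m}}{1-e^{-k\ve}}$, and then applies the generating function $\frac{e^{xu}}{e^u-1}=\sum_r B_r(x)\frac{u^{r-1}}{r!}$ with $u=k\ve$ followed by the definition of $\Li_{2-r}$ and the distribution relation $\sum_t\Li_r(\z^tw)=m^{1-r}\Li_r(w^m)$. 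The two methods are essentially Mellin-dual to each other and yield the identical expansion; the paper's version has the practical advantage that the Bernoulli polynomials and the polylogarithms appear simultaneously in closed form, with no bookkeeping of endpoint derivatives, and that the $r=0,1,2$ terms are peeled off by a single application of the distribution relation (this is how the $\log D_\z(w)$ and $\log(1-w)$ terms in~\eqref{eq.l1} arise from the weights $t/m$ in $B_1\bigl(1-\tfrac{t+\nu}{m}\bigr)$, exactly matching the exponents in~\eqref{eq.Dmz}). Your plan would arrive at the same place with more labor.

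The one place you go astray is the final paragraph. There is no transition region and no delicate uniform remainder estimate behind the exponent $2n/3$: the claim that the coefficient of $\nu^n$ is $O(\ve^{2n/3})$ is read off combinatorially from the explicit series~\eqref{eq.psi}. Since $B_r$ has degree $r$ in $\nu$, the monomial $\nu^n$ occurs only together with $\ve^{r-1}$ for $r\ge\max(n,2)$, and one checks $r-1\ge 2n/3$ in all cases except the single monomial $\nu^2\ve$ coming from $r=2$ --- which is precisely what the subtraction of $\delta_{r,2}\nu^2/m^2$ in~\eqref{eq.psi} (equivalently, the explicit term $-\tfrac{\ve\nu^2}{2m}\tfrac{z}{1-z}$ in~\eqref{eq.l1}) removes. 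The fractional grading by $\ve^{1/3}$ is only a device for packaging this statement, not a signal of a boundary-layer analysis; interpreting it as one, and leaving the issue unresolved, is the gap in your write-up, though it does not invalidate the derivation of the expansion itself.
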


Fix a positive definite $N \times N$ matrix $A$ with rational entries
and let $(z_1,\dots,z_N)$ denote the unique solution in~$(0,1)^N$ 
of {\em Nahm's equation}
\begin{equation}
\label{eq.nahm}
1 \m z_i \=  \prod_{j=1}^N z_j^{A_{ij}} \qquad (i=1,\dots,N) \,.
\end{equation}

\noindent
We define a real number
\be 
\label{alph} 
\Lambda \= -\sum_{j=1}^N \L(z)\,, 
\ee
where $\L(z)$ is the {\em Rogers dilogarithm function} (shifted by a 
constant to make $\L(1)=0$), defined for $0<z<1$ by 
\begin{equation}
\label{eq.rogers}
\L(z) \= \Li_2(z) \+\frac12\,\log(z)\,\log(1-z) \,-\, \frac{\pi^2}{6}\,.
\end{equation}
Let
\begin{equation}
\label{eq.Atilde}
\wtA \= A\+\diag(z/(1-z))
\end{equation}
where $\diag(z/(1-z))$ denotes the diagonal matrix with diagonal elements 
$z_i/(1-z_i)$. 

\begin{proposition}
\label{prop.1}
Fix $k=(k_1,\dots,k_N) \in (\BZ/m\BZ)^N$.
Consider natural numbers $n_i \in \BN$ for $i=1,\dots,N$ satisfying 
$n_i \equiv k_i \bmod m$ for $i=1,\dots,N$ and write
$n_i=\frac{1}{\ve}\log \frac{1}{z_i}
+ \frac{1}{\sqrt{\ve}}x_i$ where $z=(z_1,\dots,z_N) \in (0,1)^N$ is 
the distinguished solution of Nahm's equation and $q=\z e^{-\ve/m}$.
Then, %\red{check the power of $1-z_i$.}
\begin{align}
\label{eq.p1}
%(q)_\infty^N 
\frac{e^{-\ve Q(n)/m}}{\prod_{i=1}^N (q)_{n_i}} & \= 
\Bigl(\frac{\ve}{2\pi}\Bigr)^{\frac{N}{2}}\,
e^{\frac{\Lambda}{m\ve}}\,\prod_{i=1}^N \th_i^{B_i}(1-z_i)^{\frac{1}{2}-\frac{1}{m}}\, 
\prod_{i=1}^N D_{\z}(\th_i)^{-\frac1m}\, 
\prod_{i=1}^N\frac{\th_i^{(A k)_i}}{(\th_i;\z)_{k_i}}
\\
& \qquad e^{-\frac{1}{m} x^t \ti A x}  e^{\frac{1}{m} B^t x \ve^{1/2} -\frac{1}{m} C \ve}
\prod_{i=1}^N \psi_{\z^{k_i} z_i^{\frac{1}{m}},\z}(x_i \, \ve^{-1/2},\ve)
\end{align}
where $\th_i=z_i^{1/m_i} \in (0,1)$. 
%% $\nu = x \ve^{-1/2}$ hence $\nu \ve^{2/3} = x \ve^{-1/6} = O(\ve^\lambda)$
When $|x_i| \leq \ve^{\lambda-\frac{1}{6}}$ for some $\lambda >0$, 
then each term of the product of the $\psi$-terms in~\eqref{eq.p1} has an 
asymptotic expansion in $\BC[x][[\ve^{1/2}]]$ in which the coefficient 
of $x^n$ is $O(\ve^n)$. 
%On the other hand,
%\begin{align}
%\label{eq.p2}
%\sum_{n \in \BZ^N_\geq0; \,\, ||x|| > \ve^{-\lambda+\frac{1}{2}}}
%\frac{e^{-\ve Q(n)/m}}{\prod_{i=1}^N (q)_{n_i}} & \= O(\ve^N)
%\end{align} 
%for all $N$. 
\end{proposition}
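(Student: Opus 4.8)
The plan is to take the logarithm of the left-hand side of \eqref{eq.p1} and reduce everything to Lemma~\ref{lem.1}. Writing $(q)_{n_i}=(q;q)_\infty\big/(q^{n_i+1};q)_\infty$, the only $n_i$-dependent factor is the shifted Pochhammer $(q^{n_i+1};q)_\infty=(q\cdot q^{n_i};q)_\infty$. Since $q=\z e^{-\ve/m}$ and $n_i\equiv k_i\bmod m$, one has exactly $q^{n_i}=\z^{k_i}\th_i\,e^{-x_i\sqrt\ve/m}$ with $\th_i=z_i^{1/m}$, so Lemma~\ref{lem.1} applies verbatim with $w=\z^{k_i}\th_i$ (which satisfies $|w|<1$ and $w^m=z_i$) and $\nu=x_i\ve^{-1/2}$ (for which $\nu\ve=x_i\sqrt\ve=o(1)$ in the stated range). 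This produces the product $\prod_i\psi_{\z^{k_i}z_i^{1/m},\z}(x_i\ve^{-1/2},\ve)$ together with a list of elementary terms. The one remaining analytic input is the radial asymptotics of $(q;q)_\infty$ at $\z$, i.e.\ the modular (Dedekind $\eta$) expansion: its leading singularity $-\pi^2/(6m\ve)$ enters $e^{\Lambda/(m\ve)}$, its $-\tfrac12\log\ve$ term produces the power $\ve^{N/2}$ of the prefactor $(\ve/2\pi)^{N/2}$, and its constant term produces the remaining $\ve^0$ constants.

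Next I would substitute $n=\tfrac1\ve\log(1/z)+\tfrac1{\sqrt\ve}x$ into $e^{-\ve Q(n)/m}$, with $Q(n)=\tfrac12 n^tAn+Bn+C$, and organize the whole logarithm by powers of $\ve$. The crucial point is the vanishing of the $\ve^{-1/2}$ term: its contribution from $Q$ is $\tfrac1{m\sqrt\ve}\,x^tA\log z$, while the $-(\tfrac\nu m-\tfrac12)\log(1-z)$ term of Lemma~\ref{lem.1} contributes $-\tfrac1{m\sqrt\ve}\,x^t\log(1-z)$, and these cancel precisely because the distinguished solution obeys Nahm's equation \eqref{eq.nahm}, that is $A\log z=\log(1-z)$. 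This cancellation is the conceptual heart of the statement and is exactly why the distinguished root of \eqref{eq.nahm} is the correct expansion point. Using the same equation to evaluate $(\log z)^tA(\log z)=\sum_i\log z_i\,\log(1-z_i)$, the $\ve^{-1}$ terms---namely $-\tfrac1{m\ve}\sum_i\Li_2(z_i)$ from Lemma~\ref{lem.1}, $-\tfrac1{2m\ve}\sum_i\log z_i\log(1-z_i)$ from $Q$, and $+\tfrac{N\pi^2}{6m\ve}$ from $(q;q)_\infty$---sum to $\Lambda/(m\ve)$ by the definitions \eqref{eq.rogers} and \eqref{alph} of the Rogers dilogarithm.

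It then remains to read off the $\ve^{1/2}$, quadratic, and $\ve^0$ contributions. The term $-\tfrac{\ve\nu^2}{2m}\tfrac{z}{1-z}$ of Lemma~\ref{lem.1} becomes, at $\nu=x\ve^{-1/2}$, a quadratic form in $x$ with matrix $\diag(z/(1-z))$, which together with the $\tfrac12 n^tAn$ contribution in $A$ assembles into the Gaussian $e^{-\frac1m x^t\wtA x}$ with $\wtA$ as in \eqref{eq.Atilde}; the linear term $Bn$ gives $e^{\frac1m B^tx\sqrt\ve}$ and the constant $C$ gives $e^{-C\ve/m}$. The $x$-independent $\ve^0$ part collects $\prod_i\th_i^{B_i}$ (from $Bn$), the $(1-z_i)$ and $(1-w_i)$ factors coming from the $\tfrac12\log(1-z)$ and $-\log(1-w)$ terms of Lemma~\ref{lem.1}, the factor $\prod_i D_\z(\z^{k_i}\th_i)^{-1/m}$ from $-\tfrac1m\log D_\z(w)$, and the constant of $(q;q)_\infty$. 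To bring these to the form displayed in \eqref{eq.p1} I would use the functional equation of the cyclic quantum dilogarithm \eqref{eq.Dmz}, namely $D_\z(\z x)/D_\z(x)=(1-x)^m/(1-x^m)$, which gives $D_\z(\z^{k}\th)^{-1/m}=D_\z(\th)^{-1/m}(1-z)^{k/m}/(\th;\z)_{k}$ with $z=\th^m$, together with the elementary identity $\prod_{t=0}^{m-1}(1-\z^t\th_i)=1-z_i$. Carrying out this reorganization, while keeping track of the principal branch used in $D_\z(w)^{1/m}$, is the main obstacle: it is what converts $D_\z(\z^{k_i}\th_i)^{-1/m}$ and the various $\z^{k_i}$-twists into the single finite correction $\prod_i\th_i^{(Ak)_i}/(\th_i;\z)_{k_i}$ and pins down the exponent $\tfrac12-\tfrac1m$ on $(1-z_i)$.

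Finally, for the uniformity assertion I would feed $\nu_i=x_i\ve^{-1/2}$ into the controlled expansion of $\psi$ supplied by Lemma~\ref{lem.1}. Since that expansion lies in $\BC[\nu\ve^{2/3}][[\ve^{1/3}]]$, the monomial $\nu_i\ve^{2/3}$ becomes $x_i\ve^{1/6}$, so $\prod_i\psi$ is a power series in the $x_i$ and $\ve^{1/2}$ whose $x^n$-coefficient is controlled by the $O(\ve^{2n/3})$ bound on the $\nu^n$-coefficient of $\psi$. The hypothesis $|x_i|\le\ve^{\lambda-1/6}$ then bounds the degree-$n$ part of each factor by $O(\ve^{n\lambda})$, exhibiting the product as a genuine asymptotic expansion in $\BC[x][[\ve^{1/2}]]$ of the stated shape.
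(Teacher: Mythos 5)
Your proposal is correct and follows essentially the same route as the paper's proof: writing $(q)_{n_i}=(q;q)_\infty/(q^{n_i+1};q)_\infty$, applying Lemma~\ref{lem.1} with $w=\z^{k_i}\th_i$ and $\nu=x_i\ve^{-1/2}$, invoking the modular expansion of $(q;q)_\infty$, expanding $Q(n)$ and using Nahm's equation for the $\ve^{-1}$ and $\ve^{-1/2}$ cancellations, and reorganizing the constant factors via the functional equation $D_\z(\z x)/D_\z(x)=(1-x)^m/(1-x^m)$ (the paper's identity~\eqref{eq.Dzz}). Your treatment of the final uniformity claim via the $O(\ve^{2n/3})$ bound in~\eqref{eq.psi} is in fact spelled out more explicitly than in the paper, which disposes of that step in one sentence.
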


%%%%%%%%%%%%%%%%%%%%%%%%%%%%%%%%%%%%%%%%%%%%%%%%%%%%%%%%%%%%%%%%%%%%%%%%%%%%
%%%%%%%%%%%%%%%%%%%%%%%%%%%%%%%%%%%%%%%%%%%%%%%%%%%%%%%%%%%%%%%%%%%%%%%%%%%%

\section{Asymptotic formula for a Nahm sum}
\label{sec.asn}

The Nahm sum
$F_Q(q)$ is a formal Puiseux series with integer coefficients in the variable
$q^\frac{1}{d}$, (where $d$ is a denominator of $Q$)
analytic in a finite covering of the punctured open 
unit disk $0<|q|<1$. It will be convenient to work with the
complex-valued function $f_Q=f_{A,B,C}$ defined in the upper half-plane by
\begin{equation}
\label{eq.fQ}
f_{Q}(\tau) \= f_{A,B,C}(\tau) \= F_Q(e^{2 \pi i\tau}), \quad \Im(\tau)>0\,,
\end{equation}
with the convention that $(e^{2 \pi i\tau})^\lambda =e^{2 \pi i\tau \lambda}$ 
for any~$\lambda\in\Q$. Our main Theorem~\ref{thm.1} concerns 
the asymptotic expansion of $f_{Q}(\tau)$ at the cusps, i.e., when 
$\tau$ approaches a rational number from above. To formulate our results,
we need to introduce some more ingredients, namely the quadratic Gauss
sums (which appear in the constant term of the asymptotics),
and the formal Gaussian integration which gives the asymptotics 
to all orders.

Recall the formal Gaussian integration of an analytic 
function $f(x)$ in $N$ variables $x=(x_1,\dots,x_N) \in \BC^N$ with values
in a power series ring, following the notation of~\cite{Zagier} 
\begin{equation}
\label{eq.FGI}
\bfI_{A} \bigl[f \bigr] \=
\frac{\int_{\BR^N} e^{-\frac{1}{2} x^t A x} f(x) dx}{
\int_\BR e^{-\frac{1}{2} x ^t A x} dx} 
\;\,\sim\;\, \sum_{n \geq 0} \frac{1}{2^n n!} (\Delta^n_A f)(0)
\,,
\end{equation}
where $\Delta_A$ denotes the Laplacian with respect to the quadratic form
$x^t A x$. In particular, for $1\times1$ matrices~$A$, the formal 
Gaussian integration is given explicitly by: 
\begin{equation}
\label{eq.FGI2}
\bfI_{A} \Biggl[\,\sum_{j=0}^\infty c_j x^j \Biggr] 
\quad\sim\quad \sum_{\ell=0}^\infty (2\ell-1)!! \, c_{2\ell} \, A^{-\ell} \,.  
\end{equation}
This expression is meaningful if the $c_n$ belong to some power series ring
(such as $\BC[[\ve^{1/2}]]$) and the valuation of $c_n$ approaches zero as 
$n$ tends to infinity.

For our asymptotic formulas, we use the formal Gaussian integration
(motivated by Proposition~\ref{prop.1})
\begin{equation}
\label{eq.Ik}
%I(k,\ve) \= 
I_{Q,\z}(k,\ve) \= \bfI_{\frac{1}{m}\wtA} \biggl[
e^{\frac{1}{m} x^t B \ve^{1/2} -\frac{1}{m} C \ve}
\prod_{i=1}^N \psi_{\z^{k_i} z_i^{\frac{1}{m}},\z}(x_i \, \ve^{-1/2},\ve) \biggr]
\qquad (k \in (\BZ/m\BZ)^N)
\end{equation} 
which is a formal power series in $\ve$ with complex coefficients, 
where we think of the function inside the argument of $\bfI$ as an 
analytic function of $x=(x_1,\dots,x_N) \in \BC^N$ with values in the
power series ring $\BC[[\ve^{1/2}]]$. 

We call a positive integer $D$ a strong denominator of $Q$ if the value 
of $Q(k)$ modulo~1 for $k\in\Z^N$ depends only on the residue class of 
$k$ modulo~$D$.  (For instance, one can take $D=2d$ where $d$ is any 
common denominator of $Q$, i.e., any integer with $dQ(\Z^N)\subseteq\Z$.) 
The final ingredient is the quadratic Gauss sum 
\begin{equation}
\label{eq.Gab}
G(Q,\a) %\= {\rm Av}_{k\in\Z^N}\bigl(\z^{Q(k)}\bigr)
 \= \frac{1}{D^N} \sum_{k \in (\BZ/D \BZ)^N} 
%\z^{Q(k)}
\e\bigl( \overline{\a} \, Q(k)\bigr) \,, 
\end{equation}
where 
\be
\label{eq.ex}
\e(x)=e^{2 \pi i x} \,,
\ee
$\a \in\BQ$ is prime to $D$ 
and $\overline{\a}$ denotes the reduction of $\a$ modulo $D$ 
and $D$ is any strong denominator of $Q$.
The sum on the right is
clearly independent of the choice of~$D$.

%% Perhaps there is a formula for this Gauss sum in Skoruppa-Zagier.

We now have all the ingredients to formulate our main theorem concerning the
radial asymptotics of Nahm sums at roots of unity. Since 
$f_{A,B,C}(\tau)=\e(C \tau) f_{A,B,0}(\tau)$, we can restrict to the case 
$C=0$.

%% See pari file: qKnots/NahmSums/NahmSumsRootsOfUnity.13 

%% \red{replace $\z \th_i$ by $\th_i$ appropriately}

\begin{theorem}
\label{thm.1} 
Let $Q(x)=\frac12x^tAx+Bx$ be a quadratic function from $\Z^N$ to~$\Q$ 
as above. Fix a rational number $\a$ whose denominator $m$ is odd and
prime to some denominator of $Q$, and let $\z=\e(\a)$ denote the corresponding
primitive $m$-th root of unity.
Let $\th_i=z_i^{1/m} \in (0,1)$, where $(z_1,\dots,z_n)$
as in~\eqref{eq.nahm} is the positive real solution of the Nahm equation 
for~$A$. Then the asymptotics of the function $f_Q(\tau)$ defined 
in~\eqref{eq.fQ} as $\tau$ tends to $\a\in\Q$ is given by
\be 
\label{eq.FGIII}
e^{-\frac{\Lambda}{m \ve}}\, f_{Q}\Bigl(%\frac{a}{m} 
\a +\frac{i\ve}{2 \pi m}\Bigr)
\;\, \sim \;\,  
\frac{\chi^N}{m^{N/2}} \,c(Q)\,G(Q,\a)\,S_{Q,\z}(\ve)
\quad\;\text{\rm as $\ve\searrow0\,$,} 
\ee
where $\Lambda\in\R$ is defined by~\eqref{alph}, 
$\chi$ is the 12th root of~$\z$ defined by 
$\chi= \e\bigl(\binom{m-1}{2}\frac{\a}{12}\bigr)$, 
$c(Q)$ is defined by
\be
\label{eq.CAB}
c(Q) \= \det(\wtA)^{-\frac12} \,
\prod_{i=1}^N \th_i^{B_i}(1-z_i)^{\frac{1}{2}-\frac{1}{m}}
\ee
with $\wtA$ (a positive definite matrix with positive determinant)
as in~\eqref{eq.Atilde}, $G(Q,\a)$ is the Gauss 
sum~\eqref{eq.Gab} and $S_{Q,\z}(\ve)$ 
is the formal power series in~$\ve$ defined by
\be 
\label{eq.SQz}
S_{Q,\z}(\ve) \= \prod_{i=1}^N D_{\z}(\z\th_i)^{-\frac1m}\,
\sum_{k\in(\Z/m\Z)^N}
\z^{\overline{Q(k)}}\,
\prod_{i=1}^N\frac{\th_i^{(A k)_i}}{(\z\th_i;\z)_{k_i}}
\,I_{Q,\z}(k,\ve)
\ee
with $I_{Q,\z}(k,\ve)$ as in~\eqref{eq.Ik}, 
where $\overline{Q(k)}$ denotes the reduction of $Q(k)$ modulo $m$.
Moreover, we have 
\begin{equation} 
\label{eq.PS}
S_{Q,\z}(\ve)^m \;\in\;  F_m[[\ve]]
\end{equation}
where $F=\BQ(z_1^{\frac{1}{d}},\dots,z_N^{\frac{1}{d}})$ 
and $F_m$ is the cyclotomic extension $F(\zeta)$ of~$F$. 
\end{theorem}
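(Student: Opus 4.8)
The plan is to establish the asymptotic formula~\eqref{eq.FGIII} by a saddle-point (Laplace) evaluation of the Nahm sum, localized at the distinguished solution $z$ of Nahm's equation~\eqref{eq.nahm}, feeding in Proposition~\ref{prop.1} for the modulus of the summand and treating the root-of-unity phase by hand. Put $\tau=\a+\frac{i\ve}{2\pi m}$, so that $q=e^{2\pi i\tau}=\z e^{-\ve/m}$, and take $C=0$. Since $q=\z e^{-\ve/m}$ gives $q^{Q(n)}=\z^{Q(n)}e^{-\ve Q(n)/m}$ with $\z^{Q(n)}:=\e(\a Q(n))$, we have
\[
f_Q(\tau)\=\sum_{n\in\Z_{\ge0}^N}\z^{Q(n)}\,\frac{e^{-\ve Q(n)/m}}{\prod_{i=1}^N(q)_{n_i}}\,.
\]
First I would organize the sum by the residue $k\in(\Z/m\Z)^N$ of $n$ modulo~$m$, substitute $n_i=\frac1\ve\log\frac1{z_i}+\frac1{\sqrt\ve}x_i$, and apply Proposition~\ref{prop.1} to the real-exponential factor $e^{-\ve Q(n)/m}/\prod_i(q)_{n_i}$. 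This rewrites that factor as the explicit product in~\eqref{eq.p1}, whose $x$-dependence is a Gaussian in~$x$ with covariance governed by the positive definite matrix $\wtA$, multiplied by the slowly varying analytic factor $e^{\frac1m B^tx\sqrt\ve}\prod_i\psi_{\z^{k_i}z_i^{1/m},\z}(x_i\ve^{-1/2},\ve)$.

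The analytic core is then a Laplace evaluation of each residue-class sum. Positive definiteness of $\wtA$ makes the summand sharply peaked at $x=0$ with width $O(1)$ in~$x$, so I would first bound the tail $|x_i|>\ve^{\lambda-1/6}$ by a superpolynomially small error (using that, by~\eqref{eq.nahm}, the full exponent is globally maximized at~$z$, for a crude estimate away from the peak); this is what makes the expansion of Proposition~\ref{prop.1} applicable on the retained window. On that window, Poisson summation converts the sum over the lattice $n\equiv k\ (m)$ — whose spacing $m\sqrt\ve\to0$ after the substitution — into the continuous Gaussian integral $\int_{\BR^N}(\cdots)\,dx$, the domain being extended back to all of $\BR^N$ at the cost of a further exponentially small error. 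Reading this integral as the formal Gaussian integration $\bfI_{\frac1m\wtA}$ of~\eqref{eq.FGI}, and invoking the principle (as in~\cite{Zagier}, cf.~\eqref{eq.FGI2}) that its term-by-term expansion is genuinely asymptotic in~$\ve$, the integral is exactly $I_{Q,\z}(k,\ve)$ of~\eqref{eq.Ik}. The $k$-dependent prefactors of~\eqref{eq.p1} become the factors $\prod_i\th_i^{(Ak)_i}(\cdots;\z)_{k_i}^{-1}I_{Q,\z}(k,\ve)$ in the $k$-th term of $S_{Q,\z}(\ve)$, the $k$-independent prefactors $(\ve/2\pi)^{N/2}e^{\Lambda/(m\ve)}\prod_i\th_i^{B_i}(1-z_i)^{\frac12-\frac1m}$ give the exponential $e^{\Lambda/(m\ve)}$ and part of $c(Q)$, and the lattice covolume $m^N$ together with the Gaussian normalization supply the remaining $m^{-N/2}\det(\wtA)^{-1/2}$, assembling $m^{-N/2}c(Q)$ as in~\eqref{eq.CAB}.

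It remains to account for the oscillatory phase $\z^{Q(n)}=\e(\a Q(n))$, which is the source of the Gauss sum. Fixing a strong denominator $D$ of $Q$ coprime to the odd integer~$m$ (for instance $D=2d$), the phase $\e(\a Q(n))$ is periodic in $n$ modulo~$mD$. Since the wide Gaussian envelope in the Poisson step has Fourier transform concentrated within $O(m\sqrt\ve)$ of the zero frequency, only the mean value of this periodic phase over a period survives, all nonzero Poisson frequencies being exponentially suppressed; by the multiplicativity of quadratic Gauss sums over the coprime moduli $m$ and~$D$, the part of the phase depending on $n\bmod D$ averages to $G(Q,\a)$ of~\eqref{eq.Gab} (independent of $k$ and of the chosen $D$), while its dependence on $n\bmod m$ leaves a root-of-unity weight on each $k$-class. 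The step I expect to be the main obstacle is the exact bookkeeping that remains: matching this residual weight, together with the cyclic-dilogarithm and Pochhammer factors $D_\z(z_i^{1/m})$ and $(z_i^{1/m};\z)_{k_i}$ inherited from Proposition~\ref{prop.1}, against the precise coefficients $\z^{\overline{Q(k)}}$, $D_\z(\z\th_i)^{-1/m}$ and $(\z\th_i;\z)_{k_i}$ of~\eqref{eq.SQz}, and collapsing the leftover product of Bernoulli- and Gauss-sum constants to exactly $\chi^N$ with $\chi=\e(\binom{m-1}{2}\frac{\a}{12})$. This is convention-sensitive and uses the precise constants of Lemma~\ref{lem.1}, not merely its shape; once carried out, all factors combine into~\eqref{eq.FGIII}.

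For the final assertion~\eqref{eq.PS} I would argue by Galois descent for the Kummer extension $F_m(z_1^{1/m},\dots,z_N^{1/m})/F_m$. By~\eqref{eq.FGI2} and the expansion~\eqref{eq.psi} of the $\psi$-terms, each $\ve$-coefficient of $I_{Q,\z}(k,\ve)$ is a polynomial in the values $\Li_{2-r}(\z^{t}z_i^{1/m})$ and in Bernoulli numbers with coefficients in~$F_m$; combined with $D_\z(\z\th_i)^{-1/m}$, $\th_i^{(Ak)_i}$ and $(\z\th_i;\z)_{k_i}^{-1}$, the whole series $S_{Q,\z}(\ve)$ lies in $F_m(z_1^{1/m},\dots,z_N^{1/m})[[\ve]]$. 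A generator $\sigma$ of the Galois group sends $z_i^{1/m}\mapsto\z^{c_i}z_i^{1/m}$, which on the $k$-summand of~\eqref{eq.SQz} amounts to a shift of the indices $k_i$ together with multiplication by a root of unity; reindexing the sum over $k\in(\Z/m\Z)^N$ then shows $\sigma(S_{Q,\z}(\ve))=\w\,S_{Q,\z}(\ve)$ for some root of unity $\w$ with $\w^m=1$. Hence $S_{Q,\z}(\ve)^m$ is fixed by every such~$\sigma$, and therefore $S_{Q,\z}(\ve)^m\in F_m[[\ve]]$, as claimed.
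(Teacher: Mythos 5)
Your proposal is correct and follows essentially the same route as the paper: splitting the sum into congruence classes modulo $m$ and modulo a strong denominator $D$, factoring the phase by the Chinese remainder theorem so that the mod-$D$ part averages out (via the Poisson-summation shift-independence, the paper's Claim~1) to the Gauss sum~\eqref{eq.Gab}, and then evaluating each class modulo $m$ by Proposition~\ref{prop.1} together with tail truncation and Poisson summation (the paper's Claims~2--4, with the same nonempty window of admissible $\lambda$) to land on the formal Gaussian integral~\eqref{eq.Ik}; the residual bookkeeping you flag, namely the factor $\chi^N$ and the passage from $D_\zeta(\theta_i)$, $(\theta_i;\zeta)_{k_i}$ to $D_\zeta(\zeta\theta_i)$, $(\zeta\theta_i;\zeta)_{k_i}$ in~\eqref{eq.SQz}, is exactly where the paper, too, invokes the eta-multiplier at $\zeta e^{-\varepsilon/m}$ and the identity $D_\zeta(\zeta x)/D_\zeta(x)=(1-x)^m/(1-x^m)$ from~\eqref{eq.Dzz}. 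Your Galois-descent argument for~\eqref{eq.PS} is sound in outline (the crucial point, which you correctly set up, is that under $\theta_i\mapsto\zeta^{c_i}\theta_i$ the factor $\prod_i\theta_i^{(Ak)_i}$ absorbs the $k$-linear part of $Q(k-c)-Q(k)$ after reindexing $k\mapsto k+c$, leaving a single root of unity killed by the $m$-th power) and in fact supplies a detail the paper's written proof of Theorem~\ref{thm.1} does not spell out, since the text stops after combining~\eqref{eq.fQzfinal} with~\eqref{eq.fQzcomb} and leaves~\eqref{eq.PS} implicit.
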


It is the final statement~\eqref{eq.PS} of this theorem, restricted to $\ve=0$, 
that is used in~\cite{CGZ} to prove one direction of Nahm's Modularity 
Conjecture.

\begin{remark}
\label{rem.q=1}
When $\zeta=1$, the statement and the proof of Theorem~\ref{thm.1} is valid
when $A$, $B$ and $C$ have real (but not necessarily, rational) entries.
\end{remark}

%Note that in the above theorem $\th_i$ are well-defined, since they are 
%the $m$th roots of real numbers in $(0,1)$. Moreover, $\tilde A$ is a 
%positive definite matrix, hence its determinant is positive, and 
%$\det(\tilde A)^{1/2}$ is a well-defined positive real number.

%Note also that if $d$ is a denominator of $Q$, then
%$f_Q(d+\tau)=f_Q(\tau)$ thus $f_Q(a/m+\tau)$ is $md$-periodic with respect 
%to $a$. This is reflected in the asymptotic expansion~\eqref{eq.FGIII} 
%which is the product of two terms, the first of which $G(Q,a/m)$ depends on 
%$a$ modulo $m$, and the remaining term which depends on $a$ modulo $d$, thus 
%on $\zeta=\e(a/m)$.

%% See NahmSumsRootsOfUnity.13 

%%%%%%%%%%%%%%%%%%%%%%%%%%%%%%%%%%%%%%%%%%%%%%%%%%%%%%%%%%%%%%%%%%%%%%%%%%%%
%%%%%%%%%%%%%%%%%%%%%%%%%%%%%%%%%%%%%%%%%%%%%%%%%%%%%%%%%%%%%%%%%%%%%%%%%%%%

\section{Proof of the asymptotic formulas}
\label{sec.radial.nahm}

\subsection{Proof of Lemma~\ref{lem.1}}

In this section we give the proof of Lemma~\ref{lem.1}.

\begin{proof}
We have
\begin{align*}
- \log(q \,w \, e^{-\frac{\nu \ve}{m}};q)_\infty &=
-\sum_{n \geq 1} \log(1-q^n \,w \, e^{-\frac{\nu \ve}{m}}) 
& \\
& =
\sum_{k \geq 1} \frac{1}{k} \sum_{n \geq 1} (q^n \,w \, e^{-\frac{\nu \ve}{m}})^k 
& \\
&\sim \sum_{k \geq 1} \sum_{t=1}^m \frac{(\z^t w)^k}{k} 
\frac{e^{-k(\nu+t)\ve/m}}{1-e^{-k \ve}} & \text{sum by $n \equiv t \bmod m$}
\\
&= \sum_{k \geq 1} \sum_{t=1}^m \frac{(\z^t w)^k}{k} \sum_{r \geq 0}
B_r \Bigl(1-\frac{t+\nu}{m} \Bigr) \frac{(k \ve)^{r-1}}{r!} 
& \text{definition of $B_r(x)$} 
\\
&= \sum_{r \geq 0} \sum_{t=1}^m B_r \Bigl(1-\frac{t+\nu}{m} \Bigr) 
\Li_{2-r}(\z^t w) \frac{\ve^{r-1}}{r!} & \text{definition of $\Li_{2-r}(z)$} \,.
\end{align*}
Using the distribution property
$$
\sum_{t=1}^m \Li_r (\z^t w)=m^{1-r} \,\Li_r(w^m)
$$
for the polylogarithm, we see that the $r=0$ and $r=1$ terms are given by
%% see: Leading.Terms.QFactorials.nb
$$
\frac{1}{\ve} \sum_{t=1}^m \Li_2(\z^t w) = \frac{1}{m\ve} \Li_2(w^m) 
$$
and
$$
\sum_{t=1}^m \Bigl(\frac{1}{2}-\frac{\nu+t}{m}\Bigr) \Li_1(\z^t w) =
\Bigl(\frac{\nu}{m}-\frac{1}{2}\Bigr) \log(1-w^m) + \frac{1}{m} D_\z(w)
+\log(1-w)
$$
respectively, and that the $r=2$ term is given by
$$
\frac{\ve}{2} \frac{\nu^2}{m^2} \sum_{t=1}^m \Li_0(\z^t w) =
\frac{\ve \nu^2}{2m} \frac{w^m}{1-w^m} \,.
$$
The result follows.
\end{proof}

\subsection{Proof of Proposition~\ref{prop.1}}

In this section we give the proof of Proposition~\ref{prop.1}. 
Let $n_i$ be as in the statement of Proposition~\ref{prop.1}
and $q=\z e^{-\ve/m}$. It follows that $q^{n_i}= w_i e^{-\nu_i \ve/m}$
where $w_i=\zeta^{k_i} \th_i$, $\th_i=z_i^{\frac{1}{m}}$ 
and $\nu_i=x_i \ve^{-1/2}$.
%$\z^{k_i} z_i^{\frac{1}{m}} e^{-x_i \ve^{1/2}/m}$.
%% $n=\frac{1}{\ve}\log\bigl(\frac{1}{z}\bigr) + \frac{x}{\sqrt{\ve}}$.
Therefore,
\begin{equation}
\label{eq.qni}
\frac{1}{(q)_{n_i}}= \frac{(q^{n_i+1};q)_\infty}{(q;q)_\infty} =
\frac{(q w_i  e^{-\nu_i \ve/m};q)_\infty}{(q;q)_\infty}
\end{equation}
For the expansion of the denominator,  
the modularity of $\eta(z)$ (or alternatively, the Euler-macLaurin formula) 
implies that when $q=e^{-\ve}$ with $\ve \searrow 0$, we have:
\begin{equation}
\label{eq.etae}
\log \Bigl(\frac{1}{(q;q)_\infty} \Bigr) = \frac{\pi^2}{6\ve} 
-\frac{1}{2} \log \Bigl(\frac{2 \pi}{\ve} \Bigr) -\frac{\ve}{24} + O(\ve^K)
\end{equation}
for all $K >0$. For the expansion of the numerator in~\eqref{eq.qni}, 
we use Lemma~\ref{lem.1} combined with the following identity:
\begin{equation}
\label{eq.Dzz}
D_\z(\z^{k_i} \th_i)= \frac{(\th_i;\z)_{k_i}^m D_\z(\th_i)}{(1-z_i)^{k_i}} =
\frac{(\th_i;\z)_{k_i}^m D_\z(\th_i)}{\prod_{j=1}^N \th_j^{m (A k)_i}} \,,
\end{equation}
where the first equality follows form the fact that
$$
\frac{D_{\zeta}(\zeta x)}{D_{\zeta}(x)}=\frac{(1-x)^m}{1-x^m} \,
$$ 
and the second equality follows from the fact that $z$ is a solution to
Nahm's equation.

Finally, the quadratic form expands as follows:
\begin{align*}
Q(n) &= 
Q\bigl(\frac{1}{\ve}\log \frac{1}{z} + \frac{x}{\sqrt{\ve}}\bigr)
\\
&= \frac{1}{2\ve^2} (\log z)^t A \log z - \frac{1}{\ve \sqrt{\ve}}
x^t A \log z + \frac{1}{2 \ve} x^t A x \\
& \quad\, - \frac{1}{\ve} B^t \log z +\frac{1}{\sqrt{\ve}} B^t x + C \,. 
\end{align*}
Using the fact that $z$ satisfies Nahm's equation~\eqref{eq.nahm}, 
it follows that
\begin{align*}
-\frac{\ve}{m} Q(n) &= -\frac{1}{2 m \ve} \log z \cdot \log(1-z) +
\frac{1}{m} B^t \log z -\frac{\ve}{m} C \\
& \quad\, - \frac{1}{2m} x^t A x 
%+ \frac{1}{m\sqrt{\ve}} \sum_{i=1}^N (\log(1-z_i)+ B_i \ve) x_i 
+ \frac{1}{m\sqrt{\ve}} x^t \log(1-z) + \frac{1}{m} \sqrt{\ve} B^t x
\,.
\end{align*}
The first term in the first line of the above equation converts the
dilogarithm by the Rogers dilogarithm. 
%The remaining two terms of
%the top line give constant factors in~\eqref{eq.p1}. 
The middle term of the last line of the above equation cancels with
one term of~\eqref{eq.l1}. The remaining terms combine to 
conclude~\eqref{eq.p1}. This concludes the proof of Equation~\eqref{eq.p1}
in Proposition~\ref{prop.1}.

Fix $\lambda >0 $ and let $|x| \leq
\ve^{\lambda-\frac{1}{6}}$. Then, we can use the asymptotic 
expansion~\eqref{eq.psi} of $\psi$ and conclude the claim of the
proposition. 
\qed

\subsection{Proof of Theorem~\ref{thm.1}}
\label{sub.thm1}

In this section we give a proof of Theorem~\ref{thm.1}. Our strategy is
to split Nahm sums according to congruence classes in which case their 
summand is a positive real number with a unique peak, and their asymptotics 
can be studied using several applications of the Poisson summation formula. 

Below, $\zeta$ denotes a primitive $m$th root of unity coprime to
$D$, a strong denominator of $Q$. Denote by $a_n(q)$ the summand 
of~\eqref{eq.FABC} for $n=(n_1,\dots,n_N) \in \BZ_{\ge0}^N$.Clearly, we can
split $F_Q(q)$ as
\begin{equation}
\label{eq.Fcong}
F_{Q}(q) = \sum_{k \in (\BZ/m\BZ)^N, \,\, k' \in (\BZ/d\BZ)^N} 
F_{Q}^{[k,k']}(q) \,,
\end{equation}
where 
\begin{equation}
\label{eq.Fkk}
F_{Q}^{[k,k']}(q)=\sum_{\substack{ n\in\Z_{\ge0}^N \\ 
n \equiv k \bmod m, \,\, n \equiv k' \bmod D}} a_{n}(q) 
\qquad \text{and} \qquad a_n(q)= \frac{q^{Q(n)}}{(q)_{n_1}\cdots(q)_{n_N}}
\,.
\end{equation}
When $n$ is in a fixed congruence class modulo $mD$, with $n=k \bmod m$
and $n=k' \bmod d$, then $Q(n)$ takes a fixed value modulo $1/D$ and 
using the Chinese remainder theorem, we get:
$$
\z^{Q(n)}=\e(\overline{\overline{\a}} Q(k')) \,\z^{\overline{Q(k)}} 
$$
where $\overline{x}$ and $\overline{\overline{x}}$ denote the reduction of $x$
modulo $m$ and $D$ respectively.

Set $f_{Q}^{[k,k']}(\tau)=F_{Q}^{[k,k']}(\e(\tau))$. When 
$\tau=\a+\frac{i \ve}{2 \pi m}$ (i.e., $q=\z e^{-\ve/m}$), it follows that
\begin{equation}
\label{eq.fkk}
f_{Q}^{[k,k']}(\tau) = \e(\overline{\overline{\a}} Q(k')) \,\z^{\overline{Q(k)}}
f_{Q,\z}^{[k,k']}(\ve)
\end{equation}
where
\begin{equation}
\label{eq.fzkk}
f_{Q,\z}^{[k,k']}(\ve) =
\sum_{\substack{ n\in\Z_{\ge0}^N \\ 
n \equiv k \bmod m, \,\, n \equiv k' \bmod D}} 
a^+_n(\z e^{-\ve/m}) e^{-\ve Q(n)/m}, \qquad
a^+_n(q)= \frac{1}{(q)_{n_1}\cdots(q)_{n_N}} \,.
\end{equation} 
Recall the definition of $f(\ve) \ap g(\ve)$ from~\eqref{eq.fas}.

{\bf Claim 1:} We have: 
\begin{equation}
\label{eq.fzkksim}
f_{Q,\z}^{[k,k']}(\ve) \ap f_{Q,\z}^{[k,0]}(\ve) \,.
\end{equation}
This follows from an application of the Poisson summation formula
discussed below. Assuming this, it follows that

\begin{equation}
\label{eq.fzkkD}
\sum_{k' \bmod D} f_{Q,\z}^{[k,k']}(\ve) =\frac{1}{D} f_{Q,\z}^{[k]}(\ve)
\end{equation}
where
$$
f_{Q,\z}^{[k]}(\ve)\=
\sum_{\substack{ n\in\Z_{\ge0}^N \\ 
n \equiv k \bmod m}} a^+_n(\z e^{-\ve/m}) e^{-\ve Q(n)/m} \,.
$$
We now write
$$
f_Q(\tau) = F_Q(e^{2\pi i\tau}) = \sum_{n\in\Z_{\ge0}^N} a_n(e^{2\pi i\tau}) 
e^{2 \pi i Q(n)\tau} \,.
$$
Notice that $a_n(\e(\tau))$ depends on $\tau$ modulo 1. Now using a strong
denominator $D$ of $Q$ and 
Combining~\eqref{eq.Fcong}, \eqref{eq.fkk}, \eqref{eq.fzkksim} and
\eqref{eq.fzkkD}, we split 

\begin{equation}
\label{eq.fQzcomb}
f_{Q}\Bigl(\a +\frac{i\ve}{2 \pi m}\Bigr)
\;\, \sim \;\,  G(Q,\a) \sum_{k \in (\BZ/m\BZ)^N} \z^{\overline{Q(k)}}
f_{Q,\z}^{[k]}(\ve) \,.
\end{equation}

We now study in detail the asymptotics of $f_{Q,\z}^{[k]}(\ve)$
as $\ve \searrow 0$ for fixed $k \in (\BZ/m\BZ)^N$.
The asymptotic analysis uses Proposition~\ref{prop.1} (which describes
a unimodal property of the summand of $f_{Q,\z}^{[k]}(\ve)$) and the Poisson 
summation formula applied several times described terse in p.53--54
of~\cite{Zagier} and in much more detail in~\cite{VZ}.

Let $\frac{1}{\sqrt{\ve}} x_i = n_i -\frac{1}{\ve} \log \frac{1}{z_i}$
and set $x^{(0)}_i(\ve)=\left\lfloor -\frac{1}{\ve} \log \frac{1}{z_i} 
\right\rfloor + k_i$. So, if $n_i = k_i \bmod m$, then 
$x_i \in (x^{(0)}_i(\ve)+m \BZ)\sqrt{\ve}$. Using Proposition~\ref{prop.1},
and extending $a^+_n(q)=0$ for $n \in \BZ^N\setminus \BZ^N_{\geq0}$,
it follows that
$$
f_{Q,\z}^{[k]}(\ve) \= \gamma_{Q,\z}(\ve)\, 
\sum_{x \in (x^{(0)}(\ve)+m \BZ)\sqrt{\ve}} e^{-\frac{1}{m} x^t \ti A x}
\varphi(x,\ve)%\Bigl(\frac{x}{\ve}, \ve \Bigr)
$$
where 
$$
\gamma_{Q,\z}(\ve) \=
\Bigl(\frac{\ve}{2\pi}\Bigr)^{\frac{N}{2}}\,
e^{\frac{\Lambda}{m\ve}}\, 
\prod_{i=1}^N \th_i^{B_i}(1-z_i)^{\frac{1}{2}-\frac{1}{m}}\, 
\prod_{i=1}^N D_{\z}(\th_i)^{-\frac1m}\, 
\prod_{i=1}^N\frac{\th_i^{(A k)_i}}{(\th_i;\z)_{k_i}} e^{-C\ve/m} 
$$
and
$$
\varphi(x,\ve) =%\Bigl(\frac{x}{\ve}, \ve \Bigr) =
e^{\frac{1}{m} B^t x \ve^{1/2}}
\prod_{i=1}^N \psi_{\z^{k_i} z_i^{\frac{1}{m}},\z}(x_i \, \ve^{-1/2},\ve) \,.
$$
{\bf Claim 2:}
When $\lambda < -1/2$, then we have: 
\begin{align}
\label{eq.claim1}
\sum_{x \in (x^{(0)}+m \BZ)\sqrt{\ve}} e^{-\frac{1}{m} x^t \ti A x}
\varphi(x,\ve)%\Bigl(\frac{x}{\ve}, \ve \Bigr) 
& \ap
\sum_{x \in (x^{(0)}+m \BZ)\sqrt{\ve}; \, |x_i| < \ve^{\lambda+\frac{1}{2}}} 
e^{-\frac{1}{m} x^t \ti A x}
\varphi(x,\ve)%\Bigl(\frac{x}{\ve}, \ve \Bigr) 
\,.
\end{align}
{\bf Claim 3:}
When $\lambda > -2/3$ and $K \in \BN$, then we have:
\begin{align}
\label{eq.claim2}
e^{-\frac{1}{m} x^t \ti A x}
\varphi(x,\ve)%\Bigl(\frac{x}{\ve}, \ve \Bigr) 
&\=
e^{-\frac{1}{m} x^t \ti A x}
\Bigl(1+\sum_{p=1}^K C_p(x)\ve^{p/2}\Bigr) + o(\ve^{K(3\lambda+2)} 
\end{align}
where $C_p(x)$ are polynomials defined by Proposition~\ref{prop.1}. 

\noindent
{\bf Claim 4:}
If $P$ is a polynomial and when $\lambda <-1/2$, then we have:
\begin{align}
\label{eq.claim3}
\sum_{x \in (x^{(0)}+m \BZ)\sqrt{\ve}; \, |x_i| < \ve^{\lambda+\frac{1}{2}}} 
P(x) e^{-\frac{1}{2m} x^t \ti A x} & \ap (m \ve)^{-N/2} \int_{\BR^N} P(x)
e^{-\frac{1}{m} x^t \ti A x} dx \,.
\end{align}
Note that there is a competition of the range of $\lambda$ in claims 
2 and 3, and it is fortunate that the allowable range is nonempty. 
All three claims~\eqref{eq.claim1}-\eqref{eq.claim3} follow from an
application of the Poisson summation formula explained in detail in 
p.623--625 of~\cite{VZ}. Let us elaborate a bit with some comments on 
Poisson summation focusing on Claim 4 which states, among other things, 
that the asymptotics of a sum over a shifted lattice is independent 
of the shift. 

\noindent
{\bf Poisson summation:}
Suppose that $\phi$ is a $C^\infty$-function with more than polynomial 
decay at infinity, i.e., $|\phi(x)| =o(|x|^K)$ for all $K>0$. 
Then, 
\be
\label{eq.ps}
\sum_{k \in \BZ} \phi((k+\a)\ve) \sim \frac{1}{\ve} 
\widehat\phi(0) \,.
\ee
The proof of~\eqref{eq.ps} follows from Poisson summation formula
$$
\sum_{k \in \BZ} \phi(k+\a) \= \sum_{\ell \in \BZ} \widehat\phi(\ell) \e(\ell \a)
$$
which implies that 
\begin{equation}
\label{eq.pse}
\sum_{k \in \BZ} \phi((k+\a)\ve) \= \frac{1}{\ve}
\sum_{\ell \in \BZ} \widehat\phi\Bigl(\frac{\ell}{\ve}\Bigr) \e(\ell \a) \,.
\end{equation}
Since $\phi$ is $C^\infty$, $\widehat\phi(x)=O(|x|^{-K})$ for every $K>0$
as $|x| \gg 0 $. Consequently, for $\ell \neq 0$, each term of the right hand 
side of~\eqref{eq.pse} is exponentially small and so is
the sum for all nonzero $\ell$. This proves~\eqref{eq.ps}.

To show Claim 4, we use the Poisson summation formula
$$
\sum_{x \in (x^{(0)}+m \BZ)\sqrt{\ve}}
P(x) e^{-\frac{1}{m} x^t \ti A x} = 
\sum_{x \in x^{(0)}+m \BZ}
P(x\sqrt{\ve}) e^{-\frac{\ve}{2m} x^t \ti A x} = \sum_{x \in m \BZ} g(x)
\e(x^t x^{(0)}) \sim g(0)
$$
where $g(x)$ denotes the Fourier transform of 
$P(x\sqrt{\ve}) e^{-\frac{\ve}{2m} x^t \ti A x}$. Since 
$$
g(0)=\ve^{-N/2} \int_{\BR^N} P(x) e^{-\frac{1}{m} x^t \ti A x} dx \,,
$$ 
and since the sum in Claim 4 for $|x_i|>\ve^{\lambda+\frac{1}{2}}$ is
$O(\ve^K)$ for all $K>0$, Claim 4 follows. In conclusion, we have shown that:
\be
\label{eq.fQzfinal}
f_{Q,\z}^{[k]}(\ve) \sim \frac{\chi^N}{m^{N/2}} \,c(Q)\,
\prod_{i=1}^N D_{\z}(\z\th_i)^{-\frac1m}\,
\prod_{i=1}^N\frac{\th_i^{(A k)_i}}{(\z\th_i;\z)_{k_i}}
\,I_{Q,\z}(k,\ve)
\ee
where $I_{Q,\z}(k,\ve)$ is given by~\eqref{eq.Ik}. The above proof
applies mutantis mutandis to the proof of Claim 1.

Combining Equations~\eqref{eq.fQzfinal} and~\eqref{eq.fQzcomb}
concludes the proof of Theorem~\ref{thm.1}.
\qed

%\begin{example}
%\label{ex.1}
%%% see pari file: programs/pari/zagier/qKnots/NahmSums/NahmSumsRootsOfUnity.3
%As an example, the pari 
%file \texttt{NahmSumsRootsOfUnity.3} 
%for $A=2/7$, $B=-1/7$, $C=0$, $m=5$ or $m=11$ and 
%$d=7$, $\zeta=e(1/m)$ and $k \in \BZ/5\BZ$, $k' \in \BZ/7\BZ$
%experimentally computes that as $q=\zeta e^{-\ve/m}$:
%$$
%e^{-\frac{\a}{m\ve}} F^{[k,k']}_{Q}(q) = \gamma(k)\gamma'(k')(1+O(h))
%$$
%When $m=5$, we have
%$$
%\begin{array}{|c|ccccccc|} \hline
%k' & 0 & 1 & 2 & 3 & 4 & 5 & 6 \\ \hline
%\gamma'(k') & 1 & 1 & e(6/7) & e(4/7) & e(1/7) & e(4/7) & e(6/7) \\ \hline
%\end{array}
%$$
%and when $m=11$ we have:
%$$
%\begin{array}{|c|ccccccc|} \hline
%k' & 0 & 1 & 2 & 3 & 4 & 5 & 6 \\ \hline
%\gamma'(k') & 1 & 1 & e(4/7) & e(5/7) & e(3/7) & e(5/7) & e(4/7) \\ \hline
%\end{array}
%$$
%This numerical computation matches with part (a) of Theorem~\ref{thm.2}. Also,
%the algebraic numbers $\gamma(k)$ which are elements of the Kummer
%extension $F(\zeta_m,z^{1/m})$ where $F=\BQ(z^{1/7})$ and $z$ satisfies 
%the degree $7$ equation $(1-z)^7=z^2$ when $m=5$ or $m=11$
%were correctly recognized.
%\end{example}

%%%%%%%%%%%%%%%%%%%%%%%%%%%%%%%%%%%%%%%%%%%%%%%%%%%%%%%%%%%%%%%%%%%%%%%%%%%%
%%%%%%%%%%%%%%%%%%%%%%%%%%%%%%%%%%%%%%%%%%%%%%%%%%%%%%%%%%%%%%%%%%%%%%%%%%%%

\section{A syntactical identity among two collections of formal 
power series}
\label{sec.2series}

%%% see directory: conversations.3.2012.topological.invariance.all.loops

In this section we discuss a syntactical identity between two formal power
series at each complex root of unity, one introduced in~\cite{DG1} and
~\cite{DG2} to describe the conjectural asymptotics of the Kashaev invariant 
near $1$ and near general roots of unity, respectively, and the other
being the radial asymptotics of Nahm sums as $q$ tends to a root of 
unity~$\z$, as given in~\cite{Zagier} for $\z=1$ and in the present paper for
general~$\z$.  
We observed by chance that the asymptotic series found in~\cite{Zagier} and 
in~\cite{DG1} agreed to all orders. This then turned out to be true for 
all~$\z$, giving 
a surprising connection between radial asymptotics of $q$-series 
and asymptotics of quantum invariants defined at roots of unity 
that was highlighted in~\cite{Ga:arbeit} and further discussed 
in~\cite{GZ:qseries}. Formally, this connection can be expressed 
by the commutativity, for all roots of unity~$\z$, of the following diagram
\be
\label{eq.NZNdata}
\begin{diagram}
\node{\text{NZ data}} \arrow[2]{e,t}{T} \arrow{se,r}{(1)_\z} 
\node[2]{\text{Nahm data}} \arrow{sw,r}{(2)_\z} \\
\node[2]{\text{Power series}}
\end{diagram}
\ee
whose ingredients we now explain. 

Briefly, Neumann-Zagier (in short, NZ) data are obtained from an ideal
triangulation of a cusped hyperbolic 3-manifold $M$ triangulated with $N$
tetrahedra with shapes 
$z=(z_1,\dots,z_N) \in \BC\setminus\{0,1\}$~\cite{NZ,Th}. The shapes
satisfy the NZ equations, which have the form
\be
\label{eq.NZ}
z^{\mb A} z''^{\mb B}=e^{ - \pi i \eta}
\ee
where for a complex number $w$, we define $w'=1/(1-w)$, $w''=1-1/w$ and 
$(\mb A \,\, \mb B)$ is the upper half of a symplectic matrix (i.e.,
it has full rank and $\mb A \mb B^t$ is symmetric) and $\eta \in \BZ^N$. 
A solution of~\eqref{eq.NZ} in $\BC\setminus\{0,1,\infty\}$ 
gives rise to a $\PSL(2,\BC)$-representation of the fundamental group of $M$
and describes the complete hyperbolic structure of $M$ when 
the solution is in the upper half plane (i.e., $\Im(z_i)>0$ for all $i$).
The NZ equations are written for each edge of the triangulation, and for
a choice of (meridian-longitude) peripheral curves of each boundary 
component of $M$. When $M$ has a single torus boundary component, equipped
with a meridian and longitude, the matrices $\mb A$ and $\mb B$ discussed 
in~\cite{DG1} we obtained by eliminating the shape $z'$ (using the 
fact that $z z' z''=-1$) giving rise to the vector $\eta$ in~\eqref{eq.NZ},
and by removing one the edge equations and replacing it by a meridian gluing 
equation. In addition, a flattening $f \in \BZ^N$ was introduced and 
used in~\cite{DG1}.

The map $T$ that appears in~\eqref{eq.NZNdata} converts the NZ equation to
a Nahm equation. Assuming that $\mb B$ is nonsingular, we can formally
convert~\eqref{eq.NZ} in the following form

\begin{equation}
\label{eq.nahmt}
%1-z=\chi z^A 
1 - z_i \=  \e(B.e_i) \prod_{j=1}^N z_j^{A_{ij}} \qquad (i=1,\dots,N) \,.
\end{equation}
where $e_i$ is the $i$th coordinate vector, $\e(x)$ is as in~\eqref{eq.ex}
and 
\begin{equation}
\label{eq.ABchi}
A=I-\mb B^{-1} \mb A, \qquad B=\frac{1}{2}(-\mb B^{-1}\eta+(1,\dots,1)^t) \,.
\end{equation}
Since $(\mb A \, \mb B)$ is the upper half of a symplectic matrix, it
follows that $A$ is symmetric. This motivates the map $T$ 
from NZ-data to Nahm-data
\begin{equation}
\label{eq.NZ2nahm}
T(\mb A, \mb B, \eta, f) \mapsto (A, B, C) 
\end{equation}
where $C=f$. The transformed equation~\eqref{eq.nahmt} is the Nahm
equation of a twisted Nahm sum $F^*$ defined by:

\begin{equation}
\label{eq.FABCD}
F^*_{A,B,C}(q) =
\sum_{n\in\Z_{\ge0}^N} \e(B.n) 
\frac{q^{\frac12 n^tAn + B.n+C}}{(q)_{n_1}\cdots(q)_{n_N}}\, \in 
\BC(\!(q^{\frac{1}{d}})\!)
\end{equation}
where $A=(a_{ij})$ is a symmetric positive definite $N \times N$ matrix with 
rational entries, $B\in \BQ^N$ are column vectors and $C \in \BQ$ a scalar.

Now
fix a primitive root of unity $\z$. The arrow $(1)_\z$ in~\eqref{eq.NZNdata}
is a power series defined in~\cite{DG2} (under the hypothesis that
$\calH=-\mb B^{-1}\mb A+\text{diagonal}(z')$ is invertible), and the arrow 
$(2)_\z$ 
is the formula of Theorem~\ref{thm.1} applied formally to the twisted 
Nahm sum $F^*_{A,B,C}(q)$ as $q\to\z$. The reason for the commutativity of the
diagram~\eqref{eq.NZNdata} is that in~\cite{DG1,DG2},
the formal power series~\eqref{eq.l1} appears due to asymptotic expansion
of Faddeev's quantum dilogarithm. The latter is a ratio of two infinite
quantum factorials, one in the variable $q=\e(\tau)$ and the other in
the variable $\tilde{q}=\e(-1/\tau)$. Ignoring one of the infinite 
quantum factorials produces identical power series after formal Gaussian
integration.

%%%%%%%%%%%%%%%%%%%%%%%%%%%%%%%%%%%%%%%%%%%%%%%%%%%%%%%%%%%%%%%%%%%%%%%%%%%%
%%%%%%%%%%%%%%%%%%%%%%%%%%%%%%%%%%%%%%%%%%%%%%%%%%%%%%%%%%%%%%%%%%%%%%%%%%%%

\section{Coefficient versus radial asymptotics of Nahm sums}
\label{sec.coeff}

%% for coefficient asymptotics of the g series of 4_1 (whose coeffs) are
%% oscillatory, see pari file: 
%% programs/pari/zagier/qKnots/qSeries/Index/Coeffs
%% the interpolation uses the location of points where the sign of the
%% coefficient changes, and then interpolates these points quadratically
%% etc. The interpolation is difficu;t. A much cleaner argument is obtained
%% by applying the circle method heuristically, as explained in the 
%% blackboard at Rice University May 2018. 

%% inserted text file coeff.asymptot5.tex

In this section we discuss a %n experimentally observed 
relation between the coefficient and the radial asymptotics of an analytic 
function in the complex unit disk under some fairly weak analytic 
assumptions which (for instance) are satisfied for the Nahm 
sums~\eqref{eq.FABC}.

Consider a function
\be
\label{eq.an}
G(q) \= \sum_{n=0}^\infty c(n) q^n \,.
\ee
analytic function in the open complex unit disk $|q|<1$ and with  
an asymptotic expansion at $q=1$ 
\be
\label{eq.radG}
G(e^{-z}) \;\sim\; e^{C^2/(4z)} %B(h), \qquad B(h) \= 
\sum_{\alpha}^\infty A_\alpha z^\alpha \,,
\ee
as $z \to 0$ with $\Re(z)>0$ where $C$ is a positive real number and 
$\alpha$ is a sequence of real numbers tending to infinity. In the case 
of a Nahm sum, $\alpha \in \BN$, and in most applications, $\alpha$ lies in 
a fixed number of arithmetic progressions of the form 
$\alpha_0+\frac{1}{d}\BN$ for $\alpha_0 \in \BQ$ and $d \in \BN$. 
Assume further that for every $N >0$, there exists $\th_N>0$ such that
$\th_N=o(N)$ and $|G(e^{-h + i\th})| < h^N e^{C^2/(4h)}$ for $h>0$ (and small)
and $|\th| > |\th_N|$. 

\begin{theorem}
\label{thm.2ass}
Under the above assumptions, we have: 
\be
\label{eq.circle}
c(n) \;\sim\; \frac{1}{2} \sqrt{\frac{C}{2\pi}} \, \frac{1}{n^{3/4}} \,
e^{C\sqrt{n}} 
\sum_{\ell \geq 0, \, \alpha} 
\frac{(-1)^\ell}{C^{\ell-\alpha}} 
(2\ell-1)!! \binom{\alpha+\ell-\frac{1}{2}}{2\ell} 
\frac{1}{n^{\frac{\ell}{2}+\frac{\alpha}{2}}} \, A_\alpha 
\ee
\end{theorem}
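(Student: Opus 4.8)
The plan is to extract $c(n)$ from Cauchy's formula and evaluate the resulting integral by the saddle-point method. Writing $q=e^{-z}$ with $z=h-i\theta$ for small $h>0$, Cauchy's theorem gives
\[
c(n)\;=\;\frac{1}{2\pi}\int_{-\pi}^{\pi}G(e^{-h+i\theta})\,e^{n(h-i\theta)}\,d\theta
\;=\;\frac{1}{2\pi i}\int_{h-i\pi}^{h+i\pi}G(e^{-z})\,e^{nz}\,dz .
\]
Substituting the radial expansion~\eqref{eq.radG} turns the integrand into $e^{\phi(z)}\sum_\alpha A_\alpha z^\alpha$ with phase $\phi(z)=nz+\tfrac{C^2}{4z}$. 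This phase has a unique saddle on the positive real axis at $z_0=\tfrac{C}{2\sqrt n}$, with $\phi(z_0)=C\sqrt n$ and $\phi''(z_0)=\tfrac{4n^{3/2}}{C}>0$, which already accounts for the overall factor $e^{C\sqrt n}$ and for a Gaussian peak of width of order $n^{-3/4}$ in~$\theta$. First I would fix the radius at the saddle, $h=z_0$.

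Next I would split the $\theta$-integral into a major arc $|\theta|\le\theta_N$ and a minor arc $|\theta|>\theta_N$. On the minor arc the analytic hypothesis gives $|G(e^{-h+i\theta})|<h^N e^{C^2/(4h)}$, and since $nh=\tfrac{C\sqrt n}{2}=\tfrac{C^2}{4h}$ at $h=z_0$, the minor-arc integrand is bounded by $h^N e^{C\sqrt n}=O(n^{-N/2})\,e^{C\sqrt n}$, i.e.\ smaller than the main term $\sim n^{-3/4}e^{C\sqrt n}$ by an arbitrary power of~$n$. The hypothesis $\theta_N=o(N)$ is precisely what permits letting $N\to\infty$ together with $n$ while keeping the peak (where $|\theta|$ is of order $n^{-3/4}$) inside the major arc, so that the minor arc is negligible to all orders and the Gaussian on the major arc may be extended to the whole vertical line $\Re(z)=h$ with exponentially small error.

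On the extended contour I would insert the truncated expansion $\sum_{\alpha<K}A_\alpha z^\alpha$ and evaluate each model integral in closed form using the inverse-Laplace identity
\[
\frac{1}{2\pi i}\int_{(h)}e^{\,nz+\frac{C^2}{4z}}\,z^\alpha\,dz
\;=\;z_0^{\alpha+1}\,I_{\alpha+1}\!\bigl(C\sqrt n\bigr),
\]
where $I_{\alpha+1}$ is the modified Bessel function (for $\alpha\in\BN$). Feeding in the large-argument asymptotics of $I_{\alpha+1}(C\sqrt n)$ and collecting powers of $n$ produces, for each $\alpha$, a series in $n^{-1/2}$ with the double-factorial and binomial coefficients displayed in~\eqref{eq.circle}; equivalently, one may expand $\phi$ and $z^\alpha$ about $z_0$ and compute the Gaussian moments directly by Laplace's method. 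Summing over $\alpha$ then yields the stated expansion, the $\ell=0$ terms reproducing the leading factor $\tfrac12\sqrt{C/2\pi}\,n^{-3/4}e^{C\sqrt n}$.

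The main obstacle is uniformity rather than any single estimate. One must show that the three approximations — replacing $G$ by its truncation at order $K$, extending the contour to the full line, and integrating the merely asymptotic (and in general divergent) series term by term — each introduce errors genuinely of lower order than the retained terms. Concretely I would bound the remainder $G(e^{-z})-e^{C^2/(4z)}\sum_{\alpha<K}A_\alpha z^\alpha$ on the major arc, propagate it through the Gaussian integral, and verify that the minor-arc threshold $\theta_N$, the Gaussian width $n^{-3/4}$, and the truncation order $K$ can be chosen compatibly (using $\theta_N=o(N)$) so that the total error after integration is $o\!\bigl(n^{-3/4-K/2}e^{C\sqrt n}\bigr)$ for every $K$. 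Once this bookkeeping is in place, the computation of the explicit coefficients is routine.
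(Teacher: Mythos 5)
Your proposal follows essentially the same route as the paper: Cauchy's formula, the substitution $z=Cu/(2\sqrt n)$ placing the saddle at $z_0=C/(2\sqrt n)$, identification of the model integrals $\frac{1}{2\pi i}\int e^{nz+C^2/(4z)}z^\alpha\,dz$ with modified Bessel functions, and extraction of the coefficients via a Gaussian/binomial computation (the paper does this explicitly with the substitution $u+1/u=2-x^2$ and the identity~\eqref{eq.binomind}). Your explicit major-arc/minor-arc discussion and the role you assign to the hypothesis $|G(e^{-h+i\theta})|<h^N e^{C^2/(4h)}$ is exactly the uniformity issue the paper acknowledges but treats only briefly, so this is a correct proof along the paper's own lines.
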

Note that this implies that the asymptotics of the Fourier coefficients 
$c(n)$ determine the radial asymptotics $G(e^{-h})$ and vice-versa.

%In particular, when $B(h)=\sum_{k=0}^\infty B_k h^k$, then
%\be
%\label{eq.circle2}
%c(n) \sim \frac{C^{1/2}}{2\sqrt{2\pi}} e^{C\sqrt{n}} \frac{1}{n^{3/4}}
%\sum_{k, \ell \geq 0} \frac{C^k}{(2 \sqrt{n})^k} 
%\frac{b_\ell(k+1)}{(-C \sqrt{n})^\ell} B_k \,, \quad
%\text{where} \quad b_\ell(k) = \binom{k+\ell-\frac{1}{2}}{2\ell} \,.
%\ee
%This allows us to determine the coefficient asymptotics $c(n)$ from the
%radial asymptotics $G(e^{-h})$ and vice-versa.
%\end{empirical}

\begin{proof}
The Cauchy residue theorem and the change of variables $q=e^{-z}$ for
$z=h + \pi i \th$ for $\th \in [-1,1]$ and $h>0$ fixed implies that
$$
c(n) \= \frac{1}{2 \pi i} \int_{h-\pi i}^{h+\pi i} e^{n z} G(e^{-z}) dz \,. 
$$
Using the change of variables $z=Cu/(2\sqrt{n})$ 
% and $u=1-x^2/2+i x\sqrt{1-x^2/4}$ 
it follows that
$$
c(n) \;\sim\; \sum_\alpha A_\alpha \, K(\alpha,n)
$$
where the accuracy of the approximation will depend on the accuracy and 
uniformity of~\eqref{eq.radG} and
\begin{align}
\label{eq.Kbetan}
K(\alpha,n) &= \frac{1}{2 \pi i} \int_{h-\pi i}^{h+\pi i} 
\exp\left(\frac{C^2}{4z} + nz \right) z^\alpha dz \\ \notag 
&=\frac{1}{2 \pi i} \biggl(\frac{C}{2\sqrt{n}}\biggr)^{\alpha+1}
\int \exp\left(\frac{C\sqrt{n}}{2} \bigl(u+\frac{1}{u}\bigr)\right) 
u^\alpha du \,.
\end{align}
The function $K(\alpha,n)$ can be written in closed form in terms of the
modified Bessel function as follows $K(\alpha,n) = K_{\alpha+1}(-C\sqrt{n})$,
and the latter has a well-known asymptotic expansion but since the direct
calculation of the asymptotic expansion is not difficult, we give it 
completely here. Make the substitution 
\be
\label{eq.xz}
%\frac{C^2}{4z}+n z = = C\sqrt{n}(1-x^2/2)
u+\frac{1}{u} \= 2-x^2, \qquad u\= \bigl(\sqrt{1-x^2/4} \+ ix/2 \bigr)^2
%u = \frac{1}{2}\bigl(2-x^2+x\sqrt{x^2-4}\bigr)
\ee
to make the exponential in~\eqref{eq.Kbetan} a pure Gaussian. Then using the
standard binomial coefficient identity
%% See qKnots/qSeries/MockExample.5 for the following identity:
%% \\ binomial coefficient expansion needed for the asymptotics of the 
%% Besssel integral
%% y = sqrt(1+x^2) + x;  
%% deriv(y^k/k) == sum(n=0, 20, bin((k+n-1)/2,n) * (2*x)^n, O(x^21))
%\be
%\label{eq.binomind}
%\frac{d}{dx} \Bigl( \frac{1}{k} (\sqrt{1+x^2}+x)^k \Bigr) =
%\sum_{\ell=0}^\infty \binom{(k+\ell-1)/2}{n} (2 x)^\ell
%\ee
%% See kashaev/Bessel.Functions.nb
%\be
%\label{eq.binomind}
%\frac{d}{dw} \Bigl( \frac{1}{k} (2w\sqrt{1+w^2}+2w^2+1)^k \Bigr) =
%\sum_{j=0}^\infty \binom{k+\frac{j}{2} + \frac{1}{2}}{j} (2w)^j
%\ee
\be
\label{eq.binomind}
\frac{d}{dx} \Bigl( \frac{u^k}{k} \Bigr) =
\sum_{j=0}^\infty (-1)^j \binom{k+\frac{j+1}{2}}{j} x^j
\ee
with %$w=ix/2$ and 
$k=\alpha+1$ together with the 
standard Gaussian integral~\eqref{eq.FGI2} for $j=2\ell$ even, we get that
\be
\label{eq.kbn}
K(\alpha,n) \;\sim\; 
\frac{C^{\alpha+\frac{1}{2}}}{2^{\alpha+\frac{3}{2}} \sqrt{\pi}}
e^{C \sqrt{n}} \sum_{\ell=0}^\infty \frac{(-1)^\ell}{C^\ell} 
(2\ell-1)!! \binom{\alpha+\ell-\frac{1}{2}}{2\ell} 
n^{-\frac{3}{4}-\frac{\ell}{2}-\frac{\alpha}{2}} \,.
\ee
This completes the proof.
\end{proof}

%\begin{align*}
%\frac{1}{2 \pi i} \int_{-\pi i}^{\pi i} e^{n z+C^2/(4z)} z^\alpha dz &=
%\frac{1}{2 \pi i} \biggl(\frac{C}{2\sqrt{n}}\biggr)^{\alpha+1}
%\int e^{C\sqrt{n}/2 (u+\frac{1}{u})}  u^\alpha du \\ 
%&= \frac{1}{\pi i} \biggl(\frac{C}{2\sqrt{n}}\biggr)^{\alpha+1}
%K_{\alpha+1}(-C\sqrt{n})
%\end{align*}
%where $K_\nu(x)$ is the modified Bessel function. The latter has an
%asymptotic expansion
%$$
%K_\nu(x) \sim - \biggl(\frac{\pi}{2x}\biggr)^{1/2} e^{-x} 
%\sum_{\ell=0}^\infty \frac{b_\ell(\nu)}{x^\ell}
%$$
%as $x$ is large. 

%%%%%%%%%%%%%%%%%%%%%%%%%%%%%%%%%%%%%%%%%%%%%%%%%%%%%%%%%%%%%%%%%%%%%%%%%%%%
%%%%%%%%%%%%%%%%%%%%%%%%%%%%%%%%%%%%%%%%%%%%%%%%%%%%%%%%%%%%%%%%%%%%%%%%%%%%

\section{Modular Nahm sums}
\label{sec.C}

%%% see:nahm/Inequality.C.Nahm.Sums.nb

In this section we give an applications of the asymptotic 
Theorem~\ref{thm.1} to the case when the Nahm sum $F_{A,B,C}(q)$ is 
modular. 

Let $X_A=(X_{A,1},\dots,X_{A,N}) \in (0,1)^N$ 
denote the distinguished solution of the Nahm equation $1-X=X^A$ and let
$\xi_A \in B(\BC)$ denote the corresponding element of the Bloch group and
set
\be
\label{eq.C0}
C_0(A)=-\L(\xi_A)/(2 \pi)^2
\ee
where $\L$ is our normalization of the Rogers dilogarithm given 
in~\eqref{eq.rogers}.

We first make some general remarks about modular functions and their 
asymptotic properties near rational points. First, by \emph{modular function} 
we will always mean a function invariant under a subgroup of finite index 
of~$\SL(2,\BZ)$. (We do not have to assume that this subgroup is a congruence
subgroup, i.e.~one containing the principle congruence subgroup~$\Gamma(M)$ 
for some~$M\in\N$, although in the case of Nahm sums, which always have an 
expansion in rational powers of~$q$ with integral coefficients, a well-known 
conjecture implies that if they are modular at all then they are in fact 
modular with respect to a congruence subgroup.)  For any such 
function~$g(\tau)$ and any $P \in \P^1(\BQ)=\BQ \cup \{\infty\}$, we define 
the valuation $v_P(g) \in \BQ$ of~$g$ at~$P$ as the smallest exponent of 
$q=\e(\tau)$ in the  Fourier expansion of $(g\circ\gamma)(\tau)$, where 
$\gamma \in \SL(2,\BZ)$ is any element such that $\gamma(\infty)=P$.  
This definition is easily seen to be independent of the choice of~$\gamma$.

Recall $f_Q(\tau)$ from~\eqref{eq.fQ}, and let $F_Q$ denote $F_{A,B,C}$.

\begin{proposition}
\label{prop.Cbound}
If $f_{Q}(\tau)$ is modular, then for every $P \in \P^1(\BQ)$ we have
\be \label{eq.VC}
v_P(F_{Q}) \;\geq\; C_0(A)
% \;:=\; -\frac1{4\pi^2}\,L(\xi_A) \,,
\ee
with equality when $P=0$.
\end{proposition}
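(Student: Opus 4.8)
The plan is to derive both the inequality and the equality case from Theorem~\ref{thm.1}, reading off the leading exponential rate of $F_Q$ at each cusp from the asymptotic formula~\eqref{eq.FGIII}, and then combining this rate information with the modularity hypothesis. First I would recall that for a modular function $g$, the valuation $v_P(g)$ records the order of vanishing (or pole) at the cusp $P$, and that the radial asymptotics of $g$ as $\tau\to P$ are governed by $q^{v_P(g)}$, i.e.\ $g(\tau)\sim(\mathrm{const})\,\e(v_P(g)\tau)$ up to lower-order factors. The bridge to Theorem~\ref{thm.1} is the observation that the exponential growth rate of $f_Q(\tau)$ as $\tau\to\alpha=P$ is controlled by the factor $e^{\Lambda/(m\ve)}$ appearing in~\eqref{eq.FGIII}, where $\ve=-2\pi i m(\tau-\alpha)$; this identifies the leading radial rate with $\Lambda$ and hence, after unwinding the definitions~\eqref{alph} and~\eqref{eq.rogers} of $\Lambda$ in terms of the Rogers dilogarithm and comparing with~\eqref{eq.C0}, identifies it with $-C_0(A)\cdot(2\pi)^2/m$ suitably normalized.

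Next I would make the translation between the analytic growth rate and the valuation precise. Writing $\tau=\alpha+i\ve/(2\pi m)$, Theorem~\ref{thm.1} gives $f_Q(\tau)\sim e^{\Lambda/(m\ve)}\cdot(\text{algebraic and power-series factors})$ as $\ve\searrow0$. On the other hand, if $f_Q$ is modular, then near the cusp $P=\alpha$ one has a genuine Fourier expansion after applying $\gamma$ with $\gamma(\infty)=P$, whose leading term corresponds to the exponent $v_P(F_Q)$. The key point is that the minimal exponent $v_P$ sets the \emph{fastest possible} exponential growth: as $\tau\to P$ the dominant behaviour is $\e(v_P(F_Q)\,\gamma^{-1}\tau)$, and comparing the real exponential rates forces $v_P(F_Q)\ge C_0(A)$, because the actual growth rate $\Lambda$ extracted from Theorem~\ref{thm.1} can only match or be dominated by the extremal Fourier exponent at that cusp. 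The inequality direction thus follows from the fact that the leading term~\eqref{eq.FGIII} is one contribution and cannot grow faster than the governing Fourier exponent.

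For the equality at $P=0$, I would use that $\tau\to0$ corresponds to $\alpha=0$, $m=1$, $\z=1$, the case treated in~\cite{Zagier} and recovered here as the $\z=1$ specialization (see Remark~\ref{rem.q=1}). In this case the Gauss sum and the cyclotomic factors degenerate, $S_{Q,\z}(\ve)$ has a nonvanishing constant term (the formal Gaussian integration $\bfI_{\wtA}$ applied to a function with value $1$ at $x=0$ gives $1+O(\ve)$), and so the coefficient of $e^{\Lambda/\ve}$ in the asymptotics is genuinely nonzero. Nonvanishing of the leading coefficient means the growth rate $\Lambda$ is \emph{realized}, not merely bounded, which upgrades the inequality to equality: the extremal Fourier exponent at $P=0$ is exactly the one producing the rate $\Lambda$, giving $v_0(F_Q)=C_0(A)$.

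The main obstacle I anticipate is making the passage from ``radial growth rate'' to ``valuation'' airtight, since $v_P$ is defined via the Fourier expansion of $g\circ\gamma$ whereas Theorem~\ref{thm.1} is stated for the straight radial approach $\tau\to\alpha$ within the original coordinate. One must verify that the exponential rate seen radially indeed coincides with (or is bounded by) $2\pi$ times the imaginary direction appropriate to the cusp under $\gamma$, controlling the width of the cusp and the factor $m$ correctly; the subtlety is that a modular function may have its leading Fourier term hidden by cancellation among congruence-class contributions, so the inequality (rather than equality) at general $P$ reflects exactly the possibility that the rate $\Lambda$ supplied by Theorem~\ref{thm.1} is \emph{not} the extremal exponent at that cusp. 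Handling this comparison — and in particular justifying that no faster-growing Fourier term is incompatible with the modularity and the integrality of the $q$-expansion — is where the real care is required.
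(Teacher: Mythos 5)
Your strategy coincides with the paper's: put the rate coming from the Fourier expansion of $f\circ\gamma$ at the cusp next to the rate $e^{\Lambda/(m\ve)}$ coming from Theorem~\ref{thm.1}, and use nonvanishing of the leading constant at $\z=1$ for equality at $P=0$. However, the crucial comparison is argued with the domination running backwards. The correct mechanism is: modularity and the definition of $v_P$ give $f\bigl(\frac{a+i\ep}{c}\bigr)\sim C'\,\e(i\,v_P(f)/(c\ep))$ with $C'\neq0$ (the leading Fourier coefficient is nonzero \emph{by definition} of $v_P$), so this is the \emph{exact} exponential rate of $f$; Theorem~\ref{thm.1} gives $f\bigl(\frac{a+i\ep}{c}\bigr)\sim C''\,\e(i\,C_0(A)/(c\ep))$ where $C''$ \emph{may vanish}, so this is only an \emph{upper bound} on the rate. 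An exact rate bounded by an upper-bound rate forces $v_P\ge C_0(A)$. You instead write that ``the actual growth rate $\Lambda$ \dots can only match or be dominated by the extremal Fourier exponent'' and that the term in~\eqref{eq.FGIII} ``is one contribution and cannot grow faster than the governing Fourier exponent''; read literally, both statements bound the Theorem~\ref{thm.1} rate by the Fourier rate, which is the opposite inequality and would give $v_P\le C_0(A)$. Moreover Theorem~\ref{thm.1} is a complete asymptotic expansion of $f_Q$ near $\a$, not ``one contribution'' among several; it is precisely this completeness, combined with the possible vanishing of $C''$ on one side and the guaranteed nonvanishing of $C'$ on the other, that makes the comparison work.

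The change of variables you flag as the ``main obstacle'' is genuinely needed, but it is a one-line computation and leaving it undone leaves the proof incomplete: for $P=a/c$ with $\gamma=\sm abcd$, set $\tau=(i\ep\i-d)/c$, so that $\gamma(\tau)=\frac ac+\frac{i\ep}c$ and $q=\e(\tau)$ gives $q^{v_P(f)}=\e(i\,v_P(f)/(c\ep))$ times a constant of modulus one; this places both asymptotic statements on the same parameter $\ep\searrow0$, with the factor $c$ accounting for the cusp width. Your handling of the equality at $P=0$ (there $m=1$, $\z=1$, the Gauss sum equals $1$, $S_{Q,1}(0)=1$ and $c(Q)\neq0$, so $C''\neq0$ and the rate is attained) is correct and is exactly the paper's argument.
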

\begin{proof} 
Let $f=f_{Q}$, $P=a/c$ for $(a,c)=1$, $c>0$ and 
$\gamma=\sm abcd\in \SL(2,\BZ)$. Take $\ep >0$ and set 
$\tau=(i\ep\i-d)/c$ in the upper half-plane ($\tau \to \infty$ as 
$\ep \to 0^+$). Then, 
$$
\gamma(\tau)=\frac{a\tau+b}{c\tau+d}=\frac{a}{c} + \frac{i\ep}{c}
$$
combined with $q=\e(\tau)=\e(i v_P(f)/(c \ep)) \e(-d v_P(f)/c)$
implies that
\be
\label{eq.fas1}
(f \circ \gamma)(\tau)=f\left(\frac{a+i\ep}{c}\right) 
= C' \,\, q^{v_P(f)}(1 + O(q^{1/D})) \;\sim\; 
C' \e(i v_P(f)/(c \ep))
\ee
for some $C' \neq 0$ and some $D \in \BQ_+$. On the other hand, 
Theorem~\ref{thm.1} with $\z=\e(P)$ and $n=c$ imply that
\be
\label{eq.fas2}
f\left(\frac{a+i\ep}{c}\right) \;\sim\; 
C'' \e(i C_0(A)/(c \ep))
\ee
where $C''$ is a constant, possibly zero. A comparison 
between~\eqref{eq.fas1} and~\eqref{eq.fas2} implies inequality~\eqref{eq.VC}.
When $P=0$, i.e., $\z=1$, Theorem~\ref{thm.1} asserts that
$C'' \neq 0$. In that case, \eqref{eq.fas1} and~\eqref{eq.fas2} imply equality
in~\eqref{eq.VC}.
\end{proof}
As a special case of the proposition, for $P=\infty$ it follows that
\be  \label{eq.CC0}
v_\infty(f_{Q}) \= \min_{n \in \BZ^N_{\geq 0}} 
\bigl( Q(n) \bigr) \;\geq\; C_0(A) 
\ee
whenever $f_{Q}$ is modular. If in addition $ \frac12n^tAn+n^tB\geq 0$ for all 
$n \in \BZ^N_{\geq 0}$ (as is the case for all the modular triples
$(A,B,C)$ of rank~1, 2 or~3 listed in~\cite{Zagier}
and~\cite{VZ}), then $v_\infty(f_{A,B,C})=C$ and we deduce that $C\ge C_0(A)$. 
Moreover, in all cases observed, the equality 
$C=C_0(A)$ holds if and only if the vector~$B$ is zero, and this value occurs 
whenever the matrix~$A$ is integral and even.  (The converse to this last 
statement, however, is not true; for instance, the Nahm sum $f_{A,0,C_0(A)}$ 
is modular also for $A=\sm{4/3}{2/3}{2/3}{4/3}$
or its inverse $A=\sm1{-1/2}{-1/2}1$, as well as for several non-integral 
$3\times3$ matrices~$A$.)

%%%%%%%%%%%%%%%%%%%%%%%%%%%%%%%%%%%%%%%%%%%%%%%%%%%%%%%%%%%%%%%%%%%%%%%%%%%%
%%%%%%%%%%%%%%%%%%%%%%%%%%%%%%%%%%%%%%%%%%%%%%%%%%%%%%%%%%%%%%%%%%%%%%%%%%%%

%\section{TODO}
%
%\red{
%\begin{itemize}
%\item
%Correct the proof of the 5-term relation in the appendix~\ref{sec.5term}.
%\end{itemize}
%}

%%%%%%%%%%%%%%%%%%%%%%%%%%%%%%%%%%%%%%%%%%%%%%%%%%%%%%%%%%%%%%%%%%%%%%%%%%%%
%%%%%%%%%%%%%%%%%%%%%%%%%%%%%%%%%%%%%%%%%%%%%%%%%%%%%%%%%%%%%%%%%%%%%%%%%%%%

\appendix
\section{Application: proof of the Kashaev-Mangazeev-Stroganov
identity}
\label{sec.5term}

%% see pari file:programs/pari/zagier/qKnots/NahmSums/QuantumDilog
 
The current paper is needed crucially in~\cite{CGZ}, where the asymptotic
properties of Nahm sums at roots of unity are used to prove Nahm's 
conjecture about their modularity. A further essential ingredient 
in~\cite{CGZ} was the following finite version of the 5-term relation for
the cyclic quantum dilogarithm due to 
Kashaev, Mangazeev and Stroganov:

\begin{proposition}
\cite[Eqn.C.7]{KMS}
Let $X$, $Y$ and $Z$ be three complex numbers satisfying $Z=\frac{1-X}{1-Y}$
and $\z$ a primitive $m$th root of unity. Then 
\be
\label{eq.KMS}
\frac{D_\z(1) D_\z(y \z/x) D_\z(x/y z)}
{D_\z(1/x) D_\z(y \z) D_\z(\z/z)} \=
(\z y)^{m(1-m)/2} \, f(x,\,y\,|\,z)^m \,, 
\ee
where $x$, $y$ and $z$ are $m$th roots of $X$, $Y$ and $Z$ and 
\be
\label{eq.fxyz}
f(x,\,y\,|\,z) = \sum_{k \mod m} \frac{(\z y;\z)_k}{(\z x;\z)_k}\, 
z^k \,.
\ee
\end{proposition}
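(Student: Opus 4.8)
The plan is to show that both sides of~\eqref{eq.KMS}, viewed as functions of the $m$-th root $z$ (with $x,y$ fixed and $Z=z^m$ determined by the constraint $Z=(1-X)/(1-Y)$), satisfy one and the same first-order $q$-difference equation under the shift $z\mapsto\z z$, and then to pin down the resulting periodic ambiguity. First I would analyze the sum $f(x,y\,|\,z)$ of~\eqref{eq.fxyz}. Writing $T_k=\frac{(\z y;\z)_k}{(\z x;\z)_k}z^k$ for its summand, one checks that the hypothesis $Z=(1-X)/(1-Y)$ is precisely the condition making $T_k$ periodic in $k$ modulo~$m$ (this uses $(\z y;\z)_{k+m}=(1-Y)(\z y;\z)_k$ and $z^{m}=Z$), so that the cyclic sum is well defined. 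Since $T_{k+1}(1-\z^{k+1}x)=z\,T_k(1-\z^{k+1}y)$, summing over a full period $k\bmod m$ and using periodicity to identify $\sum_k\z^{k+1}T_{k+1}=\sum_k\z^k T_k=f(x,y\,|\,\z z)$ gives the recursion
\[
(1-z)\,f(x,y\,|\,z)\=(x-\z z\,y)\,f(x,y\,|\,\z z)\,,
\]
and hence $f^m$ satisfies the $m$-th power of this relation.

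Next I would verify that the right-hand side of~\eqref{eq.KMS} obeys the identical relation. Only the factors $D_\z(x/(yz))$ and $D_\z(\z/z)$ depend on $z$, and under $z\mapsto\z z$ each of their arguments is multiplied by $\z^{-1}$. Applying the fundamental functional equation $D_\z(\z u)/D_\z(u)=(1-u)^m/(1-u^m)$ (established in the proof of Proposition~\ref{prop.1}) to these two factors, and simplifying with the two constraint identities $YZ-X=(Y-X)/(1-Y)$ and $Z-1=(Y-X)/(1-Y)$, collapses the ratio of the two sides evaluated at $z$ and at $\z z$ to exactly $\bigl((x-\z z\,y)/(1-z)\bigr)^m$. (The $z$-independent prefactor $(\z y)^{m(1-m)/2}$ and the factors $D_\z(1),D_\z(y\z/x),D_\z(1/x),D_\z(y\z)$ drop out of this ratio.) Consequently the quotient of the two sides of~\eqref{eq.KMS} is invariant under $z\mapsto\z z$, i.e.\ depends on $z$ only through $Z=z^m$.

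It remains to show that this $Z$-periodic quotient is identically~$1$, and this is the main obstacle: the difference equation in $z$ determines $f^m$ only up to a function of $Z$, so an independent input is needed. I see two routes. In route~(i) I would derive, by the same summation-by-parts, the analogous first-order difference equations under $x\mapsto\z x$ and $y\mapsto\z y$ and check that the $D_\z$-quotient satisfies them too; this reduces the indeterminacy to a genuine function on the two-dimensional constraint locus, parametrized by $(X,Y)$, which can then be evaluated at a single base point. In route~(ii) I would instead specialize directly: taking $x\to0$ (equivalently $X\to0$) makes $(\z x;\z)_k\to1$, so the sum collapses to the cyclic $q$-binomial sum $\sum_{k\bmod m}(\z y;\z)_k\,z^k$ with $Z=1/(1-Y)$, which admits a closed product evaluation, and matching this product against the corresponding limit of the $D_\z$-quotient forces the periodic factor to equal~$1$.

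The delicate point throughout route~(ii), and the reason I expect the endgame rather than the difference equations to be the hard part, is that every available specialization moves \emph{along} the constraint surface $Z=(1-X)/(1-Y)$ rather than freely — for instance $z\to0$ forces $X\to1$, a singular point of the $D_\z$-quotient — so the limits must be taken consistently and the various branches of the $m$-th roots entering the monomial prefactor $(\z y)^{m(1-m)/2}$ must be tracked with care. Once the base-point evaluation is secured, combining it with the $z\mapsto\z z$ invariance established above yields~\eqref{eq.KMS} in full.
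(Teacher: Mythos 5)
Your strategy is genuinely different from the paper's: the paper deduces \eqref{eq.KMS} by evaluating the radial asymptotics of Ramanujan's bilateral sum $\Ps(x,y,z;\z e^{-\ve/m})$ in two ways as $\ve\searrow0$ --- once by splitting the sum into $m$ approximate Gaussians, which produces $f(x,y\,|\,z)$ times $\sqrt{2\pi/\ve}$ and an explicit constant, and once by applying the weakened asymptotic \eqref{tag1} of Lemma~\ref{lem.1} to each Pochhammer factor of the product side of \eqref{tag2}, which produces the quotient of cyclic quantum dilogarithms --- and then comparing the two expansions. Your first half is correct and I have verified it: summing $T_{k+1}(1-\z^{k+1}x)=z\,T_k(1-\z^{k+1}y)$ over a period does give $(1-z)f(x,y\,|\,z)=(x-\z zy)f(x,y\,|\,\z z)$, and applying $D_\z(\z u)/D_\z(u)=(1-u)^m/(1-u^m)$ to the two $z$-dependent factors, together with $YZ-X=(Y-X)/(1-Y)=Y(Z-1)$ and $y^m=Y$, shows the left-hand side of \eqref{eq.KMS} picks up exactly the same factor $\bigl((1-z)/(x-\z zy)\bigr)^{-m}$ under $z\mapsto\z z$. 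So the ratio of the two sides is indeed invariant under $z\mapsto\z z$.

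The gap is in the endgame, and it is a real one: neither of your two routes, as described, determines the residual ambiguity. In route~(i), invariance under the shifts in $x$, $y$, $z$ only shows that the ratio is a function of $(X,Y)$ on the constraint locus; a function of two continuous parameters is not pinned down by evaluation at a single base point unless you have separately proved it is constant (e.g.\ by controlling its divisor or degree), and you supply no such argument. Moreover the contiguous relations in $x$ and $y$ are not analogous to the $z$-shift: replacing $x$ by $\z x$ multiplies $(\z x;\z)_k$ by $(1-\z^{k+1}x)/(1-\z x)$, whose reciprocal sits inside the sum and does not telescope, so even the claimed first-order relations in $x$ and $y$ would need a genuine derivation. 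In route~(ii), the ``closed product evaluation'' of $\sum_{k\bmod m}(\z y;\z)_k z^k$ on the locus $z^m(1-y^m)=1$ is not a standard off-the-shelf formula; it is precisely the $X=0$ specialization of \eqref{eq.KMS} itself, so invoking it is circular, and the other natural degenerations (e.g.\ $x=y$, $Z=1$, or $z\to0$ forcing $X\to1$) land on $0/0$ ratios of the $D_\z$ factors, as you partly anticipate. This normalization problem is exactly what the paper's use of the $\Ps$ summation formula avoids, since Ramanujan's identity supplies the global constant for free; to complete your argument you would need an independent evaluation at one genuinely nonsingular point together with a proof that the $(X,Y)$-dependent ratio is constant.
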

An independent proof of the above identity was given in unpublished work
of Gangl and Kontsevich. In this appendix we give a simple proof of this 
identity as an application of the asymptotic formula in Lemma~\ref{lem.1},
or rather of its weakening (take $w=x$, $\nu=0$ and retain only the leading
terms)
\begin{equation}
\label{tag1}
\bigl(x;\,\z e^{-\ve/m}\bigr)_\infty^{\;-m} \ap 
\frac{D_\z(x)}{(1-x^m)^{m/2}}\,e^{\Li_2(x^m)/\ve} 
\qquad (x \in \BC,\; x^m \not\in [1,\infty), \; \ve\searrow0) \,.
\end{equation}
in combination with a famous identity of Ramanujan. 
The same method could presumably be used to prove many other identities.

Note that the right hand side of~\eqref{eq.fxyz} is well-defined
because the relation $Z(1-Y)=1-X$ implies that the summand is $m$-periodic.
Furthermore, both sides of~\eqref{eq.fxyz} are rational functions on
the curve $z^m(1-y^m)=1-x^m$, 
so it suffices to prove them in an open set of that curve. With this 
in mind, let $X,\,Y,\,Z$ be as in the proposition above but also satisfying 
that $X, Y \not\in\BR$ and $|X/Y|<|Z|<1$. 
Set $x=X^{1/m}$, $y=Y^{1/m}$, and $z=Z^{1/m}$, and choose $q=\z e^{-\ve/m}$ 
with $\ve>0$ small.
The Ramanujan $\Ps$ summation formula says that
\begin{equation}
\label{tag2}
\Ps(x,y,z;q) \;:=\; \sum_{k=-\infty}^\infty \frac{(qy;q)_k}{(qx;q)_k}\,z^k
  \= \frac{(q;q)_\infty\,(qyz;q)_\infty\,(1/yz;q)_\infty\,(x/y;q)_\infty}
  {(qx;q)_\infty\,(1/y;q)_\infty\,(z;q)_\infty\,(x/yz;q)_\infty}\,,  
\end{equation}
where $(x;q)_k=(q^kx;q)_{|k|}^{\;-1}$ for~$k<0$ and where the series
converges because of the conditions placed on $X$, $Y$ and~$Z$. 
Denote by $A_k$ the $k$th summand in the series. Then for fixed~$k$ we have 
$$
\frac{A_{k+m}}{A_k} \= \prod_{j=1}^{m}\biggl(\frac{1-q^{k+j}y}{1-q^{k+j}x}z\biggr)
 \= \frac{1-Y}{1-X}\,Z \+ \O(\ve)\= 1 \+ \O(\ve) \qquad(\ve\searrow0)\,,
$$
so $A_k$ is periodic up to finite order in $\ve$. This implies that the
left-hand side of~\eqref{tag2} is the sum of $m$ terms each of the form
$\sum_{n \in \BZ} \phi(n)$ where $\phi(x)$ is an approximate Gaussian centered
at $x=0$. If we assume only $k=\text o(1/\ve)$ rather than $k=O(1)$, then
we have instead:
$$
\frac{A_{k+m}}{A_k} \= \frac{1-e^{-k\ve}Y}{1-e^{-k\ve}X}\,Z \+ \O(\ve)
  \= 1 + \Bigl(\frac Y{1-Y}\,-\,\frac X{1-X}\Bigr)k\ve \+ \O(\ve+k^2\ve^2)\,. 
$$
It follows that 
$$ 
A_{k+nm} \= \frac{(\z y;\z)_k}{(\z x;\z)_k}\,z^k\,\cdot\,
\exp\Bigl(\frac{X-Y}{(1-X)(1-Y)}\,\frac{n^2m}2\,\ve 
\+ \O(n\ve+n^3\ve^2)\Bigr)
$$
for $k$ fixed and $n=\text o(1/\ve)$. Our assumptions on $(X,Y,Z)$
imply that $\Re\bigl(\frac{Y-X}{(1-X)(1-Y)}\bigr)<0$, so
the right-hand side behaves like a Gaussian. We deduce that
\begin{equation}
\label{tag3} 
\Ps(x,y,z;\z e^{-\ve/m}) \ap \sqrt{\frac{2\pi}{\ve}}
\sqrt{\frac{(1-Y)(1-X)}{(X-Y)}} \,
f(x,\,y\,|\,z)  
\qquad(\ve\searrow 0), 
\end{equation}
where %$\hbar=\ve/(2\pi)$ and 
$f(x,\,y\,|\,z)$ as in Equation~\eqref{eq.fxyz}.
%$f(x,y,z)=\sum\limits_{k\;\text{(mod~$m$)}}\dfrac{(y;\z)_k}{(x;\z)_k}z^k$.
On the other hand, by the transformation formula of the Dedekind eta-function
the factor $(q;q)_\infty$ in~\eqref{tag2} satisfies the asymptotic formula
$$ 
(q;q)_\infty \=  (\mu+\O(\ve)) \sqrt{\frac{2\pi}{\ve}} \,e^{-\pi^2/6m\ve} 
$$
for some $(24m)$th root of unity~$\mu$, and inserting this and the
asymptotic formula from~\eqref{tag1} into the product in equation~\eqref{tag2} 
we find the alternative asymptotic formula
\begin{equation}
\label{tag4}
\Ps(x,y,z;\z e^{-\ve/m})^m \ap \sqrt{\frac{2\pi}{\ve}} \,C^{m/2}D \,e^{B/\ve} 
\qquad (\ve\searrow 0), 
\end{equation}
where 
$$ 
B\,=\,
-\Li_2(X/YZ)+\Li_2(YZ)+\Li_2(1/YZ)+\Li_2(X/Y)+\Li_2(1)
-\Li_2(X)-\Li_2(1/Y)-\Li_2(Z),
$$
$$ 
C \= \frac{(1-YZ)\,(1-1/YZ)\,(1-X/Y)}{(1-X)\,(1-1/Y)\,(1-Z)\,(1-X/YZ)}\,,
$$
and 
$$ 
D \= \mu_{24}\,\frac{D_\z(\z x)\,D_\z(1/y)\,D_\z(z)\,D_\z(x/yz)}{D_\z(\z yz)
\,D_\z(1/yz)\,D_\z(x/y)}\,,
$$
where $\mu_{24}=\z^m$ is a 24th root of unity. The quantity $B$ vanishes by 
the standard functional equations of the dilogarithm. 
%% see pari file:programs/pari/zagier/qKnots/NahmSums/QuantumDilog
Using further the identities
%% see Mathematica file: kashaev/
%% state-integrals-evaluation-rational-points/Cyclic.Quantum.Dilogarithm.nb
$$
D_\z(\z x) \= D_\z(x) \frac{(1-x)^m}{1-x^m}\,, \qquad  
D_\z(1/x) D_\z(x) \= \mu_6 x^{-\frac{m(m-1)}{2}} \frac{(1-x^m)^m}{(1-x)^m} 
$$
(where $\mu_6$ is a sixth root of unity) and comparing the asymptotic equations 
\eqref{tag3} and \eqref{tag4}, we get~\eqref{eq.KMS} as desired. 

%%%%%%%%%%%%%%%%%%%%%%%%%%%%%%%%%%%%%%%%%%%%%%%%%%%%%%%%%%%%%%%%%%%%%%%%%%%%
%%%%%%%%%%%%%%%%%%%%%%%%%%%%%%%%%%%%%%%%%%%%%%%%%%%%%%%%%%%%%%%%%%%%%%%%%%%%

\bibliographystyle{plain}
\bibliography{biblio}
\end{document}